\newtheorem{definition}[equation]{Definition}
\newtheorem{lemma}[equation]{Lemma}
\newtheorem{proposition}[equation]{Proposition}
\newtheorem{theorem}[equation]{Theorem}
\newtheorem{corollary}[equation]{Corollary}
\newtheorem{remark}[equation]{Remark}
\newcommand\lemmaref[1]{Lemma~\ref{#1}}
\newcommand\propositionref[1]{Proposition~\ref{#1}}
\newcommand\theoremref[1]{Theorem~\ref{#1}}
\newcommand\corollaryref[1]{Corollary~\ref{#1}}
\title{Classification of Klein Four Symmetric Pairs of Holomorphic Type for $\mathrm{E}_{6(-14)}$}
\author{Haian HE}
\date{}
\address{Department of Mathematics,
College of Sciences, Shanghai University,
No.\ 99 Shangda Road, Baoshan District,
Shanghai, China P.\ R.\ 200444}
\email{hebe.hsinchu@yahoo.com.tw}
\subjclass[2010]{17B10; 22E46}
\keywords{admissible representation; holomorphic type; Klein four symmetric pair; reductive Lie group}
\begin{document}
\begin{abstract}
The author classifies Klein four symmetric pairs of holomorphic type for non-compact Lie group $\mathrm{E}_{6(-14)}$, which gives a class of pairs $(G,G')$ of real reductive Lie group $G$ and its reductive subgroup $G'$ such that there exist irreducible unitary representations $\pi$ of $G$, which are admissible upon restriction to $G'$.
\end{abstract}
\maketitle
\section{Introduction}
\subsection{Notations}
Denote by $\mathbb{R}$ and $\mathbb{C}$ the real field and the complex field respectively. For any positive integer $n$, denote by $I_n$ the $n\times n$ identity matrix. For a Lie group $G$ with its Lie algebra $\mathfrak{g}$, denote by $\mathrm{Aut}G$ and $\mathrm{Aut}\mathfrak{g}$ the automorphism group of $G$ and $\mathfrak{g}$ respectively. Also, denote by $\mathrm{Int}\mathfrak{g}$ the subgroup of $\mathrm{Aut}\mathfrak{g}$, which contains all the inner automorphisms of $\mathfrak{g}$. For $f\in\mathrm{Aut}G$, write $G^f:=\{g\in G\mid f(g)=g\}$. Similarly, for $f\in\mathrm{Aut}\mathfrak{g}$, write $\mathfrak{g}^f:=\{X\in\mathfrak{g}\mid f(X)=X\}$ and $\mathfrak{g}^{-f}:=\{X\in\mathfrak{g}\mid f(X)=-X\}$. Moreover, if two elements $g_1$ and $g_2$ in the group $G$ are conjugate by an element in the subgroup $H$ of $G$, i.e., $g_2=h^{-1}g_1h$ for some $h\in H$, then write $g_1\sim_Hg_2$; else, write $g_1\nsim_Hg_2$. For $g\in G$, denote by $[g]$ the conjugacy class of $g$ in $G$. If $g_1,g_2,\cdots,g_n$ are $n$ elements in a group $G$, then denote by $\langle g_1,g_2,\cdots,g_n\rangle$ the subgroup of $G$ generated by $g_1,g_2,\cdots,g_n$.
\subsection{Outlines}
Riemannian symmetric pairs were classified by \'{E}lie Joseph CARTAN in \cite{C1} and \cite{C2}, and the general semisimple symmetric pairs were classified by Marcel BERGER in \cite{B}. Jingsong HUANG and Jun YU studied semisimple symmetric spaces from a different point of view in \cite{HY}; that is, by determining the Klein four subgroups in the automorphism groups of compact Lie algebras. As an extension of the work of \cite{HY}, Jun YU classified all the elementary abelian 2-groups in the automorphism groups of compact Lie algebras, where groups of rank 2 are just Klein four subgroups. However, the classification of Klein four subgroups in the automorphism groups of non-compact simple Lie algebras is not clear. In this acticle, the author will study the Klein four symmetric pairs for the connected simple Lie group $\mathrm{E}_{6(-14)}:=\mathrm{Int}\mathfrak{e}_{6(-14)}$; namely, to find some pairs $(\mathrm{E}_{6(-14)},G')$ with $G'$ the reductive subgroup of the fixed points under the action of some Klein four subgroup of the automorphism group of $\mathrm{E}_{6(-14)}$.

The motivation to study the Klein four symmetric pairs for $\mathrm{E}_{6(-14)}$ comes from the branching problem raised by Toshiyuki KOBAYASHI in \cite[Problem 5.6]{Ko5}. Concretely, one may want to classify the pairs $(G,G')$ of real reductive Lie groups satisfying the following condition: there exist an infinite dimensional irreducible unitary representation $\pi$ of $G$ and an irreducible unitary representation $\tau$ of $G'$ such that\[0<\dim_\mathbb{C}\mathrm{Hom}_{\mathfrak{g}',K'}(\tau_{K'},\pi_K|_{\mathfrak{g}'})<+\infty\]where $K$ is a maximal compact subgroup of $G$, $\mathfrak{g}$ is the complexified Lie algebra of $G$, $\pi_K|_{\mathfrak{g}'}$ is the restriction of underlying $(\mathfrak{g},K)$-module of $\pi$ to the complexified Lie algebra $\mathfrak{g}'$ of $G'$, $K'$ is a maximal compact subgroup of $G'$ such that $K'\subseteq G'\cap K$, and $\tau_{K'}$ is the underlying $(\mathfrak{g}',K')$-module of $\tau$. For symmetric pairs $(G,G')$, the problem was solved in \cite[Theorem 5.2]{KO}. Unfortunately, it is unknown for general cases, even for Klein four symmetric pairs.

Here is a sufficient condition. If there exist an infinite dimensional irreducible unitary representation $\pi$ of $G$ and an irreducible unitary representation $\tau$ of $G'$ such that\[0<\dim_\mathbb{C}\mathrm{Hom}_{G'}(\tau,\pi|_{G'})<+\infty\]where $\pi|_{G'}$ is the restriction of $\pi$ to $G'$. This is true when $G$ is a simple Lie group of Hermitian type and $(G,G')$ is a symmetric pair of holomorphic type, which the author will tell the details in this article. As a generalization of this result, if $G$ is a simple Lie group of Hermitian type and $(G,G')$ is a Klein four symmetric pair of holomorphic type, this is also true. More general, for a simple Lie group of Hermitian type $G$ and its reductive subgroup $G'$, if the Lie algebra of $G'$ contains the center of the Lie algebra of a maximal compact subgroup of $G$, then it holds. It is not easy to give a complete list of all such pairs $(G,G')$, but one may just consider Klein four symmetric pairs. Hence, in this article, the author focus on the case when $G=\mathrm{E}_{6(-14)}$.

The article is organized as follows. At the beginning, the author briefly introduces Klein four symmetric pairs. After that, the author explains how Klein four symmetric pairs of holomorphic type is related to representations of Lie groups. Finally, the author classifies Klein four symmetric pairs of holomorphic type for $\mathfrak{e}_{6(-14)}$, and applies the results to representation theory.
\section{Klein Four Symmetric Pairs}
\subsection{Finite groups in $\mathrm{Aut}\mathfrak{g}_0$}
Let $\mathfrak{g}_0$ be a real semisimple Lie algebra. For each element $\sigma\in\mathrm{Aut}\mathfrak{g}_0$, define $\Theta(\sigma):\mathrm{Aut}\mathfrak{g}_0\rightarrow\mathrm{Aut}\mathfrak{g}_0$ given by $f\mapsto\sigma f\sigma^{-1}$ whose differential is $\sigma$. In particular, if $\theta$ is a Cartan involution of $\mathfrak{g}_0$, then $\Theta(\theta)$ is a Cartan involution of $\mathrm{Aut}\mathfrak{g}_0$, and the centralizer $Z_{\mathrm{Aut}\mathfrak{g}_0}(\theta):=\{\sigma\in\mathrm{Aut}\mathfrak{g}_0\mid\sigma\theta=\theta\sigma\}$ is a maximal compact subgroup of $\mathrm{Aut}\mathfrak{g}_0$.

Conversely, Suppose that $K$ is a maximal compact subgroup of $\mathrm{Aut}\mathfrak{g}_0$. Take a Cartan involution $\theta'$ of $\mathfrak{g}_0$, and $Z_{\mathrm{Aut}\mathfrak{g}_0}(\theta')$ is a maximal compact subgroup of $\mathrm{Aut}\mathfrak{g}_0$. Hence, there exists an element $f\in\mathrm{Aut}\mathfrak{g}_0$ such that $K=f^{-1}Z_{\mathrm{Aut}\mathfrak{g}_0}(\theta')f=Z_{\mathrm{Aut}\mathfrak{g}_0}(f^{-1}\theta'f)$, and $\theta:=f^{-1}\theta'f$ is also a Cartan involution of $\mathfrak{g}_0$. Therefore, any maximal compact subgroup of $\mathrm{Aut}\mathfrak{g}_0$ is given by $Z_{\mathrm{Aut}\mathfrak{g}_0}(\theta)$ for some Cartan involution $\theta$ of $\mathfrak{g}_0$.
\begin{proposition}\label{2}
If $H$ is a finite subgroup of $\mathrm{Aut}\mathfrak{g}_0$, then there exists a Cartan involution $\theta$ of $\mathfrak{g}_0$, which centralizes $H$.
\end{proposition}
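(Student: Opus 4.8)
The plan is to reformulate the statement in terms of maximal compact subgroups, using exactly the correspondence established in the paragraphs preceding the proposition. The key observation is that a Cartan involution $\theta$ of $\mathfrak{g}_0$ centralizes $H$ precisely when $\sigma\theta=\theta\sigma$ for every $\sigma\in H$, i.e. precisely when $H\subseteq Z_{\mathrm{Aut}\mathfrak{g}_0}(\theta)$. Since the discussion just above the proposition shows that the subgroups $Z_{\mathrm{Aut}\mathfrak{g}_0}(\theta)$, as $\theta$ ranges over all Cartan involutions of $\mathfrak{g}_0$, are exactly the maximal compact subgroups of $\mathrm{Aut}\mathfrak{g}_0$, the proposition becomes equivalent to the assertion that the finite group $H$ is contained in some maximal compact subgroup of $\mathrm{Aut}\mathfrak{g}_0$.

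First I would record the two structural inputs the argument needs. Because $\mathfrak{g}_0$ is semisimple, $\mathrm{Aut}\mathfrak{g}_0$ is a Lie group whose identity component is $\mathrm{Int}\mathfrak{g}_0$, and the quotient $\mathrm{Aut}\mathfrak{g}_0/\mathrm{Int}\mathfrak{g}_0$ is finite; hence $\mathrm{Aut}\mathfrak{g}_0$ has only finitely many connected components. Secondly, the finite subgroup $H$ is trivially a compact subgroup. I would then invoke the Cartan--Iwasawa--Mal'cev theorem on maximal compact subgroups: in a Lie group with finitely many connected components, every compact subgroup is contained in a maximal compact subgroup. Applying this to $H\subseteq\mathrm{Aut}\mathfrak{g}_0$ produces a maximal compact subgroup $K$ with $H\subseteq K$; by the correspondence above, $K=Z_{\mathrm{Aut}\mathfrak{g}_0}(\theta)$ for some Cartan involution $\theta$, and then $H\subseteq Z_{\mathrm{Aut}\mathfrak{g}_0}(\theta)$ says exactly that $\theta$ centralizes $H$.

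The one point that genuinely requires care, rather than a black-box citation, is the passage from ``$H$ compact'' to ``$H$ lies in a maximal compact subgroup''. If a self-contained argument is preferred in place of quoting the maximal compact subgroup theorem, I would instead let $H$ act by conjugation, $\sigma\cdot\theta'=\sigma\theta'\sigma^{-1}$, on the space $\mathcal{C}$ of all Cartan involutions of $\mathfrak{g}_0$. This $\mathcal{C}$ is a Riemannian symmetric space of non-compact type, hence a complete simply connected manifold of non-positive curvature on which $\mathrm{Aut}\mathfrak{g}_0$ acts by isometries. Since $H$ is finite, every $H$-orbit is a finite, hence bounded, subset of $\mathcal{C}$, and the Cartan fixed point theorem (the circumcenter of a bounded set is fixed by every isometry stabilizing that set) yields a point $\theta\in\mathcal{C}$ fixed by all of $H$. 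A fixed point of the conjugation action is precisely a Cartan involution $\theta$ with $\sigma\theta\sigma^{-1}=\theta$ for all $\sigma\in H$, that is, one centralizing $H$, which is again what we want.

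Either way the crux is the same geometric fact; I expect the main obstacle to be verifying the hypotheses under which it applies, namely the finiteness of the component group of $\mathrm{Aut}\mathfrak{g}_0$ and the compactness of $H$, both of which are immediate here. The remaining steps are purely formal translations between the language of centralizers and that of maximal compact subgroups.
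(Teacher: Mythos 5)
Your proposal is correct and follows essentially the same route as the paper: the paper's proof simply observes that the finite group $H$ lies in some maximal compact subgroup of $\mathrm{Aut}\mathfrak{g}_0$, which by the preceding discussion equals $Z_{\mathrm{Aut}\mathfrak{g}_0}(\theta)$ for some Cartan involution $\theta$. Your write-up merely makes explicit what the paper treats as ``clear'' (the Cartan--Iwasawa--Mal'cev theorem together with finiteness of the component group of $\mathrm{Aut}\mathfrak{g}_0$), and the alternative fixed-point argument on the space of Cartan involutions is a valid but unnecessary embellishment of the same underlying geometric fact.
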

\begin{proof}
It is clear that the finite group $H$ is included in a maximal compact subgroup of $\mathrm{Aut}\mathfrak{g}_0$, so $H\subseteq Z_{\mathrm{Aut}\mathfrak{g}_0}(\theta)$ for some Cartan involution $\theta$ of $\mathfrak{g}_0$; namely, $\theta$ centralizes $H$.
\end{proof}
\subsection{Klein Four Subgroups}
From now on, the author focuses on real simple Lie groups $G$ and real simple Lie algebras $\mathfrak{g}_0$.
\begin{definition}\label{8}
Let $G$ (respectively, $\mathfrak{g}_0$) be a real simple Lie group (respectively, Lie algebra), and let $\sigma$ and $\tau$ be two non-identity involutive automorphisms of $G$ (respectively, $\mathfrak{g}_0$). Then the subgroup $G^{\{\sigma,\tau\}}:=G^\sigma\cap G^\tau$ (respectively, $\mathfrak{g}_0^{\{\sigma,\tau\}}:=\mathfrak{g}_0^\sigma\cap\mathfrak{g}_0^\tau$) is called a Klein four symmetric subgroup (respectively, subalgebra) of $G$ (respectively, $\mathfrak{g}_0$), and $(G,G^{\{\sigma,\tau\}})$ (respectively, $(\mathfrak{g}_0,\mathfrak{g}_0^{\{\sigma,\tau\}})$) is called a Klein four symmetric pair. Two Klein four symmetric pairs $(G_1,G'_1)$ (respectively, $(\mathfrak{g}_1,\mathfrak{g}'_1)$) and $(G_2,G'_2)$ (respectively, $(\mathfrak{g}_2,\mathfrak{g}'_2)$) are said to be isomorphic if there exists a Lie group (respectively, Lie algebra) isomorphism $f:G_1\rightarrow G_2$ (respectively, $f:\mathfrak{g}_1\rightarrow\mathfrak{g}_2$) such that $f(G'_1)=G'_2$ (respectively, $f(\mathfrak{g}'_1)=\mathfrak{g}'_2$).
\end{definition}
\begin{remark}\label{10}
The subgroup $\langle\sigma,\tau\rangle$ of $\mathrm{Aut}G$ or $\mathrm{Aut}\mathfrak{g}_0$ generated by the set $\{\sigma,\tau\}$ is a Klein four subgroup, and it is obvious that $G^{\{\sigma,\tau\}}=G^{\langle\sigma,\tau\rangle}:=\displaystyle{\bigcap_{\gamma\in\langle\sigma,\tau\rangle}}G^\gamma$ or $\mathfrak{g}_0^{\{\sigma,\tau\}}=\mathfrak{g}_0^{\langle\sigma,\tau\rangle}:=\displaystyle{\bigcap_{\gamma\in\langle\sigma,\tau\rangle}}\mathfrak{g}_0^\gamma$.
\end{remark}
\begin{remark}\label{11}
According to the classification of Klein four symmetric pair for compact exceptional Lie algebras $\mathfrak{u}_0$ \cite[Table 4]{HY}, the conjugacy classes of Klein four symmetric subalgebras in $\mathfrak{u}_0$ are in one-to-one correspondence to the conjugacy classes of Klein four subgroups in $\mathrm{Aut}\mathfrak{u}_0$. However, it is not true for classical Lie algebras. For example, there are two non-conjugate Klein four subgroups $\Gamma_1$ and $\Gamma_2$ in $\mathrm{Aut}\mathfrak{su}(4)$, such that $\mathfrak{su}(4)^{\Gamma_1}\cong2\mathfrak{so}(2)\cong2\mathfrak{sp}(1)\cong\mathfrak{su}(4)^{\Gamma_2}$ by \cite[Table 3]{HY}.
\end{remark}
\subsection{Klein Four Symmetric Pairs of Holomorphic Type}
Take a Cartan involution $\theta$ for a real simple Lie group $G$, which defines a maximal compact subgroup $K$. Let $\tau$ be an involutive automorphism of $G$, which commutes with $\theta$. Use the same letter $\tau$ to denote its differential, and then $\tau$ stabilizes $\mathfrak{k}_0$ and also the center $Z(\mathfrak{k}_0)=\mathbb{R}Z$. Because $\tau^2=1$, there are two possibilities: $\tau Z=Z$ or $\tau Z=-Z$. Recall \cite[Definition 1.4]{Ko4} that the symmetric pair $(G,G^\tau)$ or $(\mathfrak{g}_0,\mathfrak{g}_0^\tau)$ is said to be of holomorphic (respectively, anti-holomorphic) type if $\tau Z=Z$ (respectively, $\tau Z=-Z$), in which case $\tau$ may be said to be of holomorphic (respectively, anti-holomorphic) type for convenience.

According to the classification of the symmetric pairs of holomorphic type and anti-holomorphic type in \cite[Table 3.4.1 \& Table 3,4,2]{Ko4}, there does not exist a symmetric pair which is of both holomorphic type and anti-holomorphic type, so the author remarks that whether a symmetric pair is of holomorphic type or anti-holomorphic type does not depends on the choice of the Cartan involution which commute with the given involutive automorphism defining the symmetric pair.
\begin{definition}\label{16}
Suppose that $G$ (respectively, $\mathfrak{g}_0$) is a real simple Lie group (respectively, Lie algebra) of Hermitian type. Let $\tau$ and $\sigma$ be two involutive automorphisms of $G$ such that $\tau\sigma=\sigma\tau$. If both $\tau$ and $\sigma$ are of holomorphic type, then $(G,G^{\{\tau,\sigma\}})$ or $(\mathfrak{g}_0,\mathfrak{g}_0^{\{\tau,\sigma\}})$ is called a Klein four symmetric pair of holomorphic type.
\end{definition}
Unlike symmetric pairs of holomorphic type, the classification of Klein four symmetric pairs of holomorphic type is unknown. In the last section, the author will classify Klein four symmetric pairs of holomorphic type for $\mathrm{E}_{6(-14)}$ which is a simple Lie group of Hermitian type.
\subsection{Admissible Representations}
Suppose that $G$ is a real simple Lie group, and $G'$ is a reductive subgroup of $G$. Let $\pi$ be a unitary representation of $G$ on a Hilbert space. Recall the definition of $G'$-admissible representations given in \cite{Ko1} and \cite{Ko2}. The restriction $\pi|_{G'}$ from $G$ to $G'$ is called $G'$-admissible if the restriction $\pi|_{G'}$ splits into a discrete Hilbert direct sum of irreducible representations:\[\pi|_{G'}\cong\displaystyle{\widehat{\bigoplus}_{\tau\in\widehat{G'}}}n_\pi(\tau)\tau,\]with each multiplicity $n_\pi(\tau)$ a nonnegative integer, where $\widehat{G'}$ denotes the unitary dual of $G'$.

The point here is that there is no continuous spectrum in the branching law and that each multiplicity is finite. Here is a special but important example of $G'$-admissible restrictions, which was shown in \cite{HC1}, \cite{HC2}, and \cite{HC3}: If $K$ is a maximal compact subgroup of $G$, any irreducible unitary representation $\pi$ of $G$ is $K$-admissible when restricted to $K$.

Suppose that $G$ is a real simple Lie group of Hermitian type; that is, $G/K$ carries a structure of a Hermitian symmetric space where $K$ is a maximal compact subgroup of $G$. Equivalently, the center $Z(\mathfrak{k}_0)$ of Lie algebra $\mathfrak{k}_0$ of $K$ has dimension 1. The classification of simple Lie algebras $\mathfrak{g}_0$ of Hermitian type is given as follows:\[\mathfrak{su}(p,q),\mathfrak{sp}(n,\mathbb{R}),\mathfrak{so}(m,2)(n\neq2),\mathfrak{e}_{6(-14)},\mathfrak{e}_{7(-25)}.\]Such a Lie algebra $\mathfrak{g}_0$ satisfies the condition that a Cartan subalgebra $\mathfrak{h}_0$ of $\mathfrak{k}_0$ becomes a Cartan subalgebra of $\mathfrak{g}_0$. Moreover, there exists a characteristic element $Z\in Z(\mathfrak{k}_0)$ such that $\mathfrak{g}=\mathfrak{l}\oplus\mathfrak{p}_+\oplus\mathfrak{p}_-$ is a decomposition with respect to the eigenspaces of $Z$ on the complexified Lie algebra $\mathfrak{g}$ corresponding to the eigenvalue 0, $\sqrt{-1}$, and $-\sqrt{-1}$ respectively. Similarly, Remove the subscript, and then $\mathfrak{h}$ denotes the complexification of $\mathfrak{h}_0$. Choose a positive system $\Phi^+$ for $(\mathfrak{g},\mathfrak{h})$ and its simple system $\Delta$. Denote by $\Phi_\mathfrak{k}^+$ the set of the positive roots of $(\mathfrak{k},\mathfrak{h})$, and write $\Phi_\mathfrak{p}^+:=\Phi^+\setminus\Phi_\mathfrak{k}^+$. Set $\Delta_\mathfrak{k}:=\Phi_\mathfrak{k}^+\cap\Delta$ and $\Delta_\mathfrak{p}:=\Delta\setminus\Delta_\mathfrak{k}$. Obviously, $\Delta_\mathfrak{p}$ contains exactly 1 element because $Z(\mathfrak{k})$ has complex dimension 1.

Suppose that $V$ is a simple $(\mathfrak{g},K)$-module, and then set $V^{\mathfrak{p}_+}=\{v\in V\mid Yv=0\textrm{ for any }Y\in\mathfrak{p}^+\}$. Since $K$ normalizes $\mathfrak{p}_+$, $V^{\mathfrak{p}_+}$ is a $K$-submodule. Further, $V^{\mathfrak{p}_+}$ is either zero or an irreducible finite-dimensional representation of $K$. A $(\mathfrak{g},K)$-module $V$ is called a highest weight module if $V^{\mathfrak{p}_+}\neq\{0\}$.

Recall the definition of unitary highest weight representation. If $\pi$ is a unitary representation of $G$ on a Hilbert space $\mathcal{H}$, and $\mathcal{H}_K$ is the underlying $(\mathfrak{g},K)$-module, then $\pi$ is called a unitary highest weight representation if $\mathcal{H}_K^{\mathfrak{p}_+}\neq\{0\}$; namely, $\mathcal{H}_K$ is a highest weight $(\mathfrak{g},K)$-module.
\begin{proposition}\label{13}
Let $G$ be a real simple Lie group of Hermitian type, and $G'$ a reductive subgroup of $G$. Let $K$ be a maximal compact subgroup of $G$ such that $K':=G'\cap K$ is a maximal compact subgroup of $G'$. Suppose that the Lie algebra $\mathfrak{g}'$ of $G'$ contains the center $Z(\mathfrak{k}_0)$ of the Lie algebra $\mathfrak{k}_0$ of $K$. If $\pi$ is an irreducible unitary highest weight representation of $G$, the restriction $\pi\mid_{G'}$ is $G'$-admissible.
\end{proposition}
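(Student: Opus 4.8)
The plan is to reduce the assertion to a $K'$-theoretic finiteness statement and then appeal to Kobayashi's infinitesimal-to-global principle. First I would show that $\pi|_{K'}$ is $K'$-admissible, i.e.\ that every irreducible representation of $K'$ occurs in $\pi|_{K'}$ with finite multiplicity. By the admissibility criterion of \cite{Ko1} and \cite{Ko2}, $K'$-admissibility of an irreducible unitary representation $\pi$ of $G$ already forces $G'$-admissibility of $\pi|_{G'}$; thus this single reduction absorbs all the analytic content, and what remains is an algebraic multiplicity count on the underlying $(\mathfrak{g},K)$-module $\mathcal{H}_K$.

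The core of that count is a grading of $\mathcal{H}_K$ by the characteristic element $Z$. Since $\pi$ is a unitary highest weight representation, $\mathcal{H}_K^{\mathfrak{p}_+}$ is nonzero and therefore, as recalled above, an irreducible finite-dimensional $K$-module. Because $\mathcal{H}_K$ is simple and $\mathfrak{p}_+\mathcal{H}_K^{\mathfrak{p}_+}=0$, the Poincar\'{e}--Birkhoff--Witt theorem yields $\mathcal{H}_K=U(\mathfrak{p}_-)\mathcal{H}_K^{\mathfrak{p}_+}$, and as $\mathfrak{p}_-$ is abelian $\mathcal{H}_K$ is a quotient, as a $K$-module, of $S(\mathfrak{p}_-)\otimes\mathcal{H}_K^{\mathfrak{p}_+}$, carrying the induced $\mathbb{Z}_{\geq0}$-grading. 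Now $Z$ lies in the centre of $\mathfrak{k}$, hence acts by a fixed scalar on $\mathcal{H}_K^{\mathfrak{p}_+}$, and $\mathfrak{p}_-$ is the $(-\sqrt{-1})$-eigenspace of $\mathrm{ad}Z$; consequently $Z$ acts on the degree-$n$ summand $S^n(\mathfrak{p}_-)\otimes\mathcal{H}_K^{\mathfrak{p}_+}$ by a single scalar, and these scalars are pairwise distinct as $n$ varies. Thus the $Z$-eigenspace decomposition of $\mathcal{H}_K$ coincides with the grading, and each eigenspace, being a quotient of a finite-dimensional space, is finite-dimensional.

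Next I would bring in the hypothesis $Z(\mathfrak{k}_0)\subseteq\mathfrak{g}'$. The element $Z\in Z(\mathfrak{k}_0)$ commutes with all of $\mathfrak{k}_0$ and lies in $\mathfrak{g}'$, so it lies in $\mathfrak{k}_0\cap\mathfrak{g}'$, the Lie algebra of $K'$, and in fact in its centre. Hence, by Schur's lemma, $Z$ acts by a single scalar on each irreducible $K'$-module $\tau$, so every copy of $\tau$ occurring in $\mathcal{H}_K|_{K'}$ lies in one and the same $Z$-eigenspace. Since that eigenspace is finite-dimensional, $\tau$ occurs with finite multiplicity, which is exactly $K'$-admissibility; combined with the reduction above, this proves the proposition.

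The two verifications used here---that $\mathcal{H}_K=U(\mathfrak{p}_-)\mathcal{H}_K^{\mathfrak{p}_+}$ and that the grading has finite-dimensional pieces---are routine, so the single genuinely deep ingredient is the passage from infinitesimal ($K'$-) admissibility to honest unitary $G'$-admissibility. That is where I expect the real obstacle to lie, and I would not attempt to reprove it: one must show that the discreteness of the $K'$-decomposition propagates to a Hilbert-space direct-sum decomposition of $\pi|_{G'}$ with no continuous spectrum, which is precisely the theorem of Kobayashi invoked in the first paragraph.
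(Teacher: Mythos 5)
Your proof is correct and takes essentially the same route as the paper: decompose the underlying $(\mathfrak{g},K)$-module into eigenspaces of the central element $Z$, note that since $Z$ lies in the center of the Lie algebra of $K'$ each $K'$-type is confined to a single finite-dimensional eigenspace, deduce $K'$-admissibility, and then invoke Kobayashi's criterion \cite[Theorem 1.2]{Ko2} to pass from $K'$-admissibility to $G'$-admissibility. The only difference is in how finite-dimensionality of the $Z$-eigenspaces is verified --- you use the PBW surjection from $S^n(\mathfrak{p}_-)\otimes\mathcal{H}_K^{\mathfrak{p}_+}$ onto each eigenspace, whereas the paper reads off the eigenvalue $\lambda(Z)-n$ from the weight decomposition $\lambda-\beta-n\alpha$ of the highest weight module --- and your version, if anything, makes that finiteness claim more transparent.
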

\begin{proof}
Let $\lambda$ be the highest weight of the underlying $(\mathfrak{g},K)$-module $V_K$ of $\pi$, and then each weight in $V_K$ is of the form $\lambda-\beta-n\alpha$ for some non-negative integer $n$, where $\beta$ is an integer linear combination of elements in $\Phi_\mathfrak{k}^+$ and $\alpha$ is the unique simple root in $\Delta_\mathfrak{p}$. It is obvious that the weight vectors with any fixed weight of $V_K$ form a finite-dimensional space. Take an element $Z\in Z(\mathfrak{k})$ such that $\alpha(Z)=1$, and the weight vectors with the weight $\lambda-\beta-n\alpha$ lie in the eigenspace of $Z$ with the eigenvalue $\lambda(Z)-n$; namely, the eigenvalue of $Z$ on each weight vector only depends on $n$. Because $\Phi_\mathfrak{k}^+$ is a finite set, each eigenspace is of finite-dimensional. Notice that $K'$ is compact, $V_K$ is decomposed as a direct sum of $K'$-representations. By assumption, $Z$ is in the complexified Lie algebra of $K'$, and hence the vectors in the same $K'$-type have the same eigenvalue of $Z$; namely, each $K'$-type lies in some eigenspace of $Z$. It follows that $V_K$ is $K'$-admissible, and thus the restriction $\pi|_{K'}$ is $K'$-admissible. By \cite[Theorem 1.2]{Ko2}, the restriction $\pi|_{G'}$ is $G'$-admissible.
\end{proof}
\begin{remark}\label{15}
\propositionref{13} is a generalization of the result \cite[Theorem 7.4(4)]{Ko3} which deals with the case when $(G,G')$ is a symmetric pair, but the proof for \cite[Theorem 7.4(4)]{Ko3} does not involve any property of symmetric pairs, so the method of the proof can also be applied to the generalized case when $(G,G')$ is a pair of reductive Lie groups. However, in the proof of \propositionref{13}, the author avoids constructing holomorphic sections of holomorphic vector bundles as in the proof for \cite[Theorem 7.4(4)]{Ko3}.
\end{remark}
\begin{corollary}\label{17}
If $(G,G^{\{\tau,\sigma\}})$ is a Klein four symmetric pair of holomorphic type, then any unitary highest weight representation $\pi$ of $G$ is $G'$-admissible.
\end{corollary}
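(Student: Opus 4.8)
The plan is to deduce this from \propositionref{13} by checking that a Klein four symmetric pair of holomorphic type satisfies all of its hypotheses, with $G':=G^{\{\tau,\sigma\}}$. Write $\Gamma:=\langle\tau,\sigma\rangle$ for the Klein four subgroup generated by $\tau$ and $\sigma$, so that by \remarkref{10} one has $G'=G^\Gamma$. First I would produce a compatible Cartan involution. Passing to differentials, $\Gamma$ is a finite subgroup of $\mathrm{Aut}\mathfrak{g}_0$, so by \propositionref{2} there is a Cartan involution $\theta$ of $\mathfrak{g}_0$ that centralizes $\Gamma$; that is, $\theta$ commutes with both $\tau$ and $\sigma$. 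Let $K$ be the maximal compact subgroup of $G$ determined by $\theta$. Since $\theta$ commutes with every $\gamma\in\Gamma$, for $g\in G'=G^\Gamma$ and any $\gamma\in\Gamma$ we have $\gamma(\theta(g))=\theta(\gamma(g))=\theta(g)$, so $\theta$ stabilizes $G'$; being the fixed-point subgroup of the finite group $\Gamma$, $G'$ is reductive in $G$ and $\theta$-stable.

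Next I would verify that $K':=G'\cap K$ is a maximal compact subgroup of $G'$. Because $\theta$ is a Cartan involution of $G$ and $G'$ is a $\theta$-stable reductive subgroup, the restriction $\theta|_{G'}$ is a Cartan involution of $G'$, and the associated maximal compact subgroup is exactly $(G')^{\theta}=G'\cap G^{\theta}=G'\cap K=K'$, which is precisely the compatibility required in \propositionref{13}.

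The decisive step, and the only one genuinely specific to the holomorphic type assumption, is to show that the Lie algebra of $G'$ contains $Z(\mathfrak{k}_0)$. Let $Z$ be the characteristic element with $Z(\mathfrak{k}_0)=\mathbb{R}Z$. By the definition of holomorphic type, the statements that $\tau$ and $\sigma$ are each of holomorphic type mean exactly that $\tau Z=Z$ and $\sigma Z=Z$ at the level of differentials. Hence $Z\in\mathfrak{g}_0^\tau\cap\mathfrak{g}_0^\sigma=\mathfrak{g}_0^{\{\tau,\sigma\}}=\mathfrak{g}'_0$, and therefore $Z(\mathfrak{k}_0)=\mathbb{R}Z\subseteq\mathfrak{g}'_0$, the Lie algebra of $G'$.

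With all the hypotheses verified, I would invoke \propositionref{13}: for any irreducible unitary highest weight representation $\pi$ of $G$, the restriction $\pi|_{G'}$ is $G'$-admissible; if a reducible unitary highest weight representation is allowed, one decomposes it into irreducible unitary highest weight summands and applies the conclusion to each. I expect the step requiring the most care to be the identification of $K'=G'\cap K$ as a maximal compact subgroup of $G'$, since it rests on the standard but non-trivial fact that a Cartan involution restricts to a Cartan involution on a $\theta$-stable reductive subgroup; by contrast, the holomorphic-type input reduces transparently to the single eigenvalue condition $\tau Z=\sigma Z=Z$, which is what forces $Z(\mathfrak{k}_0)$ into $\mathfrak{g}'_0$.
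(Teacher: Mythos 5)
Your proposal is correct and follows essentially the same route as the paper's own proof: invoke \propositionref{2} to obtain a Cartan involution commuting with $\tau$ and $\sigma$, observe that the holomorphic type condition forces $Z(\mathfrak{k}_0)\subseteq\mathfrak{g}_0^\tau\cap\mathfrak{g}_0^\sigma=\mathfrak{g}_0^{\{\tau,\sigma\}}$, and then apply \propositionref{13}. Your additional verifications (that $G'$ is $\theta$-stable and that $K'=G'\cap K$ is a maximal compact subgroup of $G'$) simply make explicit compatibility hypotheses that the paper leaves implicit, so the two arguments are the same in substance.
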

\begin{proof}
Since $\tau\sigma=\sigma\tau$, by \propositionref{2}, there exists a maximal compact subgroup $K$ of $G$, which is fixed by both $\tau$ and $\sigma$. Moreover, both $\mathfrak{g}_0^\tau$ and $\mathfrak{g}_0^\sigma$ contains the center $Z(\mathfrak{k}_0)$ of $\mathfrak{k}_0$ because both $\tau$ and $\sigma$ are of holomorphic type. Thus, $Z(\mathfrak{k}_0)\subseteq\mathfrak{g}_0^{\{\tau,\sigma\}}$. The conclusion follows from \propositionref{13}.
\end{proof}
\section{Klein Four Symmetric pairs of holomorphic type for $\mathrm{E}_{6(-14)}$}
\subsection{Elementary abelian 2-subgroups in $\mathrm{Aut}\mathfrak{e}_{6(-78)}$}
Let $\mathfrak{g}=\mathfrak{e}_6$, the complex simple Lie algebra of type $\mathrm{E}_6$. Fix a Cartan subalgebra of $\mathfrak{g}$ and a simple root system $\{\alpha_i\mid1\leq i\leq6\}$, the Dynkin diagram of which is given in Figure 1.
\begin{figure}
\centering \scalebox{0.7}{\includegraphics{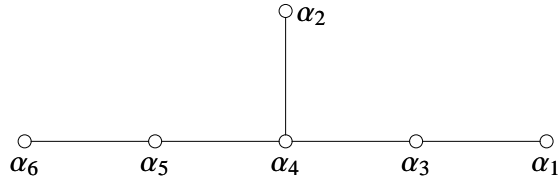}}
\caption{Dynkin diagram of $\mathrm{E}_6$.}
\end{figure}
For each root $\alpha$, denote by $H_\alpha$ its coroot, and denote by $X_\alpha$ the normalized root vector so that $[X_\alpha,X_{-\alpha}]=H_\alpha$. Moreover, one can normalize $X_\alpha$ appropriately such that\[\mathrm{Span}_\mathbb{R}\{X_\alpha-X_{-\alpha},\sqrt{-1}(X_\alpha+X_{-\alpha}),\sqrt{-1}H_\alpha\mid\alpha:\textrm{positive root}\}\cong\mathfrak{u}_0:=\mathfrak{e}_{6(-78)}\]is a compact real form of $\mathfrak{g}$ by \cite{Kn}. It is well known that\[\mathrm{Aut}\mathfrak{u}_0/\mathrm{Int}\mathfrak{u}_0\cong\mathrm{Aut}\mathfrak{g}/\mathrm{Int}\mathfrak{g}\]which is just the automorphism group of the Dynkin diagram.

The author follows the constructions of involutive automorphisms of $\mathfrak{u}_0$ in \cite{HY}. Let $\tau$ be the specific involutive automorphism of the Dynkin diagram defined by
\begin{eqnarray*}
\begin{array}{rclcrcl}
\tau(H_{\alpha_1})=H_{\alpha_6},&&\tau(X_{\pm\alpha_1})=X_{\pm\alpha_6},\\
\tau(H_{\alpha_2})=H_{\alpha_2},&&\tau(X_{\pm\alpha_2})=X_{\pm\alpha_2},\\
\tau(H_{\alpha_3})=H_{\alpha_5},&&\tau(X_{\pm\alpha_3})=X_{\pm\alpha_5},\\
\tau(H_{\alpha_4})=H_{\alpha_4},&&\tau(X_{\pm\alpha_4})=X_{\pm\alpha_4}.
\end{array}
\end{eqnarray*}
Let $\sigma_1=\mathrm{exp}(\sqrt{-1}\pi H_{\alpha_2})$, $\sigma_2=\mathrm{exp}(\sqrt{-1}\pi(H_{\alpha_1}+H_{\alpha_6}))$, $\sigma_3=\tau$, $\sigma_4=\tau\mathrm{exp}(\sqrt{-1}\pi H_{\alpha_2})$, where $\mathrm{exp}$ represents the exponential map from $\mathfrak{u}_0$ to $\mathrm{Aut}\mathfrak{u}_0$. Then $\sigma_1$, $\sigma_2$, $\sigma_3$, and $\sigma_4$ represent all conjugacy classes of involutions in $\mathrm{Aut}\mathfrak{u}_0$, which correspond to real forms $\mathfrak{e}_{6(2)}$, $\mathfrak{e}_{6(-14)}$, $\mathfrak{e}_{6(-26)}$, and $\mathfrak{e}_{6(6)}$.

Let $G=\mathrm{Aut}\mathfrak{u}_0$, and $G_0=\mathrm{Int}\mathfrak{u}_0$ be the identity component of $G$. From \cite{HY}, it is known that $(G_0)^{\sigma_3}\cong\mathrm{F}_{4(-52)}$, the compact Lie group of type $\mathrm{F}_4$, and there exist involutive automorphisms $\tau_1$ and $\tau_2$ of $\mathrm{F}_{4(-52)}$ such that $\mathfrak{f}_{4(-52)}^{\tau_1}\cong\mathfrak{sp}(3)\oplus\mathfrak{sp}(1)$ and $\mathfrak{f}_{4(-52)}^{\tau_2}\cong\mathfrak{so}(9)$, where $\mathfrak{f}_{4(-52)}$ denotes the compact Lie algebra of type $\mathrm{F}_4$.

By \cite{HY}, $\tau_1$, $\tau_2$, $\sigma_3\tau_1$, and $\sigma_3\tau_2$ represent all conjugacy classes of involutive automorphisms in $G^{\sigma_3}$ except for $\sigma_3$ and there are conjugacy relations in $G$: $\tau_1\sim_G\sigma_1$, $\tau_1\sim_G\sigma_2$, $\sigma_3\tau_1\sim_G\sigma_4$, and $\sigma_3\tau_2\sim_G\sigma_3$. Moreover, $((G_0)^{\sigma_3})^{\tau_1}\cong\mathrm{Sp}(3)\times\mathrm{Sp}(1)/\langle(-I_3,-1)\rangle$. Let $\mathbf{i}$, $\mathbf{j}$, and $\mathbf{k}$ denote the fundamental quaternion units, and then set $x_0=\sigma_3$, $x_1=\tau_1=(I_3,-1)$, $x_2=(\mathbf{i}I_3,\mathbf{i})$, $x_3=(\mathbf{j}I_3,\mathbf{j})$, $x_4=(\left(\begin{array}{ccc}-1&0&0\\0&-1&0\\0&0&1\end{array}\right),1)$, and $x_5=(\left(\begin{array}{ccc}-1&0&0\\0&1&0\\0&0&-1\end{array}\right),1)$.

For a integer pair $(r,s)$ with $r\leq2$ and $s\leq3$, define\[F_{r,s}:=\langle x_0,x_1,\cdots,x_s,x_4,x_5,\cdots,x_{r+3}\rangle\]and\[F'_{r,s}:=\langle x_1,x_2,\cdots,x_s,x_4,x_5,\cdots,x_{r+3}\rangle.\]

On the other hand, $(G_0)^{\sigma_4}\cong\mathrm{Sp}(4)/\langle-I_4\rangle$. Let $\eta_1=\mathbf{i}I$, $\eta_2=\left(\begin{array}{cc}-I_2&0\\0&I_2\end{array}\right)$, and $\eta_3=\left(\begin{array}{cc}-1&0\\0&I_3\end{array}\right)$. By \cite{HY}, $\eta_1$, $\eta_2$, $\eta_3$, $\sigma_4\eta_1$, $\sigma_4\eta_2$, and $\sigma_4\eta_3$ represent all conjugacy classes of involutive automorphisms in $G^{\sigma_4}$ except for $\sigma_3$ and there are conjugacy relations in $G$: $\eta_1\sim_G\eta_3\sim_G\sigma_1$, $\eta_2\sim_G\sigma_2$, $\sigma_4\eta_1\sim_G\sigma_4\eta_2\sim_G\sigma_4$, and $\sigma_4\eta_3\sim_G\sigma_3$. Set $y_0=\sigma_4$, $y_1=\mathbf{i}I_4$, $y_2=\mathbf{j}I_4$, $y_3=\left(\begin{array}{cc}-I_2&0\\0&I_2\end{array}\right)$, $y_4=\left(\begin{array}{cc}0&I_2\\I_2&0\end{array}\right)$, $y_5=\left(\begin{array}{cccc}1&0&0&0\\0&-1&0&0\\0&0&1&0\\0&0&0&-1\end{array}\right)$, and $y_6=\left(\begin{array}{cccc}0&1&0&0\\1&0&0&0\\0&0&0&1\\0&0&1&0\end{array}\right)$.

For a integer pair $(u,v,r,s)$ with $u+v\leq1$ and $r+s\leq2$, define\[F_{u,v,r,s}:=\langle y_0,y_1,\cdots,y_{u+2v},y_3,y_4,\cdots,y_{2+2s},y_{3+2s},y_{5+2s},\cdots,y_{1+2r+2s}\rangle\]and\[F'_{u,v,r,s}:=\langle y_1,y_2,\cdots,y_{u+2v},y_3,y_4,\cdots,y_{2+2s},y_{3+2s},y_{5+2s},\cdots,y_{1+2r+2s}\rangle.\]According to \cite[Proposition 6.3 \& Proposition 6.5]{Y}, each elementary abelian 2-subgroup in $\mathrm{Aut}\mathfrak{u}_0$ is conjugate to one of the groups in the one family of $F_{r,s}$, $F'_{r,s}$, $F_{u,v,r,s}$, and $F'_{u,v,r,s}$, and all groups in the families of $F_{r,s}$, $F'_{r,s}$, $F_{u,v,r,s}$, and $F'_{u,v,r,s}$ are pairwisely non-conjugate.
\subsection{Elements conjugate to $\sigma_2$}
In this part, the author needs to determine which involutive automorphisms are conjugate to $\sigma_2$ in $G$ because the noncompact dual $\mathfrak{g}_0=\mathfrak{e}_{6(-14)}$ of $\mathfrak{u}_0$ corresponding to $\sigma_2$ is of Hermitian type.

Firstly, consider the elements in $F_{r,s}$ and $F'_{r,s}$. In fact, one only needs to check the elements in the largest group $F_{2,3}=\langle x_0,x_1,x_2,x_3,x_4,x_5\rangle$.
\begin{lemma}\label{18}
The elements in $F_{2,3}$ conjugate to $\sigma_2$ in $\mathrm{Aut}\mathfrak{u}_0$ are exactly $x_4$, $x_5$, and $x_4x_5$.
\end{lemma}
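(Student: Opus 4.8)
The plan is to decide, for each involution in $F_{2,3}$, to which of the four $\mathrm{Aut}\,\mathfrak{u}_0$-classes $\sigma_1,\sigma_2,\sigma_3,\sigma_4$ it belongs, and then to read off those equal to the class of $\sigma_2$. First I would note that $F_{2,3}$ is elementary abelian, so every non-identity element is an involution, and separate the elements involving the generator $x_0=\sigma_3$ from those lying in $\langle x_1,\dots,x_5\rangle\subseteq(G_0)^{\sigma_3}\cong\mathrm{F}_{4(-52)}$. The former are outer automorphisms (a product of the outer $\sigma_3$ with an inner element), whereas $\sigma_2=\exp(\sqrt{-1}\pi(H_{\alpha_1}+H_{\alpha_6}))$ is inner; since the inner/outer dichotomy is invariant under conjugation, no element $x_0w$ can be conjugate to $\sigma_2$ (indeed, by the stated relations such elements are conjugate to $\sigma_3$ or $\sigma_4$). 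This leaves only the $31$ non-identity elements of $\langle x_1,\dots,x_5\rangle$.

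Next I would observe that $\sigma_4=\sigma_3\exp(\sqrt{-1}\pi H_{\alpha_2})$ is outer as well, so $\sigma_1$ and $\sigma_2$ are the only inner involution classes in $\mathrm{Aut}\,\mathfrak{u}_0$; hence each $w\in\langle x_1,\dots,x_5\rangle$ satisfies $w\sim_G\sigma_1$ or $w\sim_G\sigma_2$. Within $(G_0)^{\sigma_3}\cong\mathrm{F}_{4(-52)}$ there are exactly two conjugacy classes of involutions, represented by $\tau_1$ and $\tau_2$ with $\mathfrak{f}_{4(-52)}^{\tau_1}\cong\mathfrak{sp}(3)\oplus\mathfrak{sp}(1)$ and $\mathfrak{f}_{4(-52)}^{\tau_2}\cong\mathfrak{so}(9)$. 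Combined with the given relations $\tau_1\sim_G\sigma_1$ and $\tau_2\sim_G\sigma_2$ and with $\sigma_1\nsim_G\sigma_2$, this reduces everything to a single integer: the class of $w$ is detected by $\dim\mathfrak{f}_{4(-52)}^{w}$, with $w\sim_G\sigma_2$ precisely when $\dim\mathfrak{f}_{4(-52)}^{w}=\dim\mathfrak{so}(9)=36$, and $w\sim_G\sigma_1$ when it equals $\dim(\mathfrak{sp}(3)\oplus\mathfrak{sp}(1))=24$.

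To compute these fixed dimensions I would use the $\tau_1$-eigenspace decomposition $\mathfrak{f}_{4(-52)}=(\mathfrak{sp}(3)\oplus\mathfrak{sp}(1))\oplus\mathfrak{m}_0$, where $\mathfrak{m}_0$ is the isotropy representation of $\mathrm{F}_{4(-52)}/(\mathrm{Sp}(3)\times\mathrm{Sp}(1))$, whose complexification is the $\mathrm{Sp}(3,\mathbb{C})\times\mathrm{SL}(2,\mathbb{C})$-module $\Lambda^3_0\mathbb{C}^6\otimes\mathbb{C}^2$. Writing $x_1^ax_2^bx_3^cx_4^dx_5^e=(q_{b,c}D_{d,e},(-1)^aq_{b,c})$ with $q_{b,c}=\mathbf{i}^{\,b}\mathbf{j}^{\,c}$ a scalar quaternion and $D_{d,e}$ a diagonal sign matrix, I would evaluate $\dim\mathfrak{f}_{4(-52)}^{w}=\dim\mathfrak{sp}(3)^{\mathrm{Ad}(g)}+\dim\mathfrak{sp}(1)^{\mathrm{Ad}(q)}+\dim\mathfrak{m}_0^{w}$: the first two summands are read off directly from the conjugation action of $g=q_{b,c}D_{d,e}$ and $q=(-1)^aq_{b,c}$, and the last from the eigenvalues of $g$ on $\mathbb{C}^6$ (hence on $\Lambda^3_0\mathbb{C}^6$) tensored with those of $q$ on $\mathbb{C}^2$. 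The outcome is that the total equals $36$ exactly when $w=(D_{d,e},1)$ with $D_{d,e}$ a diagonal sign matrix carrying two entries $-1$, that is, precisely for $w=x_4$, $x_5$, $x_4x_5$, and equals $24$ for every other choice, which is the assertion of the lemma.

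The main obstacle is this last fixed-dimension count on $\mathfrak{m}_0$: one must track how $q_{b,c}$ and $D_{d,e}$ act simultaneously on $\Lambda^3_0\mathbb{C}^6\otimes\mathbb{C}^2$, because the three summands of $\dim\mathfrak{f}_{4(-52)}^{w}$ vary individually with the type of $w$ and only their sum cleanly separates $36$ from $24$; in particular, the elements with nontrivial quaternion part, where the $\mathfrak{sp}(3)$- and $\mathfrak{m}_0$-contributions shift in opposite directions, demand the most care. Everything preceding this step is purely structural, resting only on the inner/outer dichotomy and the two relations $\tau_1\sim_G\sigma_1$ and $\tau_2\sim_G\sigma_2$.
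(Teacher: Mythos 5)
Your proposal is correct and reaches the lemma by a genuinely different route from the paper. Both arguments begin identically: elements involving $x_0$ are outer automorphisms, hence not conjugate to the inner $\sigma_2$, so only the $31$ non-identity elements of $F'_{2,3}=\langle x_1,\dots,x_5\rangle$ need to be sorted. From there the paper works by explicit conjugations inside $H=\mathrm{Sp}(3)\times\mathrm{Sp}(1)/\langle(-I_3,-1)\rangle$ (the quaternion units are pairwise conjugate; sign matrices of equal signature are conjugate), reducing everything to a few reference elements whose identification with $\tau_1$ (hence $\sigma_1$) or $\tau_2$ (hence $\sigma_2$) it quotes from \cite{HY} and \cite{Y}. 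You instead classify each element $w$ by the single conjugation-invariant integer $\dim\mathfrak{f}_{4(-52)}^{w}\in\{24,36\}$, computed through the decomposition $\mathfrak{f}_{4(-52)}=(\mathfrak{sp}(3)\oplus\mathfrak{sp}(1))\oplus\mathfrak{m}_0$ with $\mathfrak{m}_0\otimes\mathbb{C}\cong\Lambda^3_0\mathbb{C}^6\otimes\mathbb{C}^2$; besides this structural fact you need only that $\mathrm{F}_{4(-52)}$ has exactly two classes of involutions together with the relations $\tau_1\sim_G\sigma_1$, $\tau_2\sim_G\sigma_2$, $\sigma_1\nsim_G\sigma_2$ (note the paper's line ``$\tau_1\sim_G\sigma_2$'' is a typo for $\tau_2\sim_G\sigma_2$, as its own proof of this lemma confirms). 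The paper's route is shorter because it outsources the hard identifications to the literature; yours is more self-contained and gives a uniform numerical criterion, and since your invariant is constant on $H$-classes you only need one computation per type $(I_3,-1)$, $(\mathbf{q}I_3,\pm\mathbf{q})$, $(D,1)$, $(D,-1)$, $(\mathbf{q}D,\pm\mathbf{q})$, where $\mathbf{q}\in\{\mathbf{i},\mathbf{j},\mathbf{k}\}$ and $D$ is a nontrivial sign matrix, which tames the bookkeeping you flag as the main obstacle. Two caveats: you state rather than execute the final count, though the asserted values are right (e.g.\ $x_4$ gives $13+3+20=36$, while $x_2x_4$ gives $9+1+14=24$); and in executing it one must remember that the fixed space of a product of two commuting involutions is the sum of the joint $(+,+)$- and $(-,-)$-eigenspaces, not the intersection of the two fixed spaces --- precisely the point where, as you note, the $\mathfrak{sp}(3)$- and $\mathfrak{m}_0$-contributions move in opposite directions.
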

\begin{proof}
Let $H=\mathrm{Sp}(3)\times\mathrm{Sp}(1)/\langle(-I_3,-1)\rangle$ for convenience. Consider the following four points.
\begin{enumerate}[$\bullet$]
\item Obviously, $x_0\sim_G\sigma_3\nsim_G\sigma_2$. Moreover, $x_0$ is an outer automorphism while $x_i$ for $1\leq i\leq5$ are all inner automorphisms. Thus if $\omega\in F'_{2,3}=\langle x_1,x_2,x_3,x_4,x_5\rangle$, then $x_3\omega$ is an outer automorphism which cannot be conjugate to the inner automorphism $\sigma_2$. Hence the author only needs to check the elements in the biggest group $F'_{2,3}=\langle x_1,x_2,x_3,x_4,x_5\rangle$.
\item Let $S_1=\{x_1,x_2,x_3,x_1x_2,x_1x_3,x_2x_3,x_1x_2x_3\}$. Firstly, $x_1\sim_G\sigma_1$ by \cite{Y}. Secondly, $x_2$ is conjugate to $\tau_1$ in $\mathrm{F}_{4(-52)}$ by \cite{HY}, and it follows that $x_2\sim_G\sigma_1$. Thirdly, since it is well known that $\pm\mathbf{i}$, $\pm\mathbf{j}$, and $\pm\mathbf{k}$ are pairwisely conjugate in the group ring of the quoternion group, it is immediate that $x_2\sim_Hx_3\sim_Hx_2x_3\sim_Hx_1x_2\sim_Hx_1x_3\sim_Hx_1x_2x_3$. In particular, it is conclude that all elements in $S_1$ are conjugate to $\sigma_1$ in $\mathrm{Aut}\mathfrak{u}_0$.
\item Let $S_2=\{x_4,x_5,x_4x_5\}$. Firstly, $x_4$ is conjugate to $\tau_2$ in $\mathrm{F}_{4(-52)}$ by \cite{HY}, and it follows that $x_4\sim_G\sigma_2$. Secondly, it is obvious that $x_4\sim_Hx_5\sim_Hx_4x_5$. In particular, it is conclude that all elements in $S_2$ are conjugate to $\sigma_2$ in $\mathrm{Aut}\mathfrak{u}_0$.
\item Consider $\omega_1\omega_2$ where $\omega_1\in S_1$ and $\omega_2\in S_2$. Firstly, $x_1x_4=(\left(\begin{array}{ccc}1&0&0\\0&1&0\\0&0&-1\end{array}\right),1)$ in $\mathrm{Sp}(3)\times\mathrm{Sp}(1)/\langle(-I_3,-1)\rangle$, which is conjugate to $\tau_1$ in $\mathrm{F}_{4(-52)}$ by \cite{HY}, and it follows that $x_1x_4\sim_G\sigma_1$. Similary, one has $x_1x_5\sim_Gx_1x_4x_5\sim_G\sigma_1$. Secondly, because $\mathbf{i}$ is conjugate to $-\mathbf{i}$ in the group ring of the quoternion group, it is easy to see that $x_2x_4\sim_Hx_1x_2x_4\sim_Hx_2$, and hence $x_2x_4\sim_Gx_1x_2x_4\sim_G\sigma_1$. In similar way, one can show that $\omega_1\omega_2\sim_G\sigma_1$ for all $\omega_1\in S_1$ and $\omega_2\in S_2$.
\end{enumerate}
Based on the four points, the conclusion holds.
\end{proof}
Secondly, consider the elements in $F_{u,v,r,s}$ and $F'_{u,v,r,s}$. In fact, one only needs to check the elements in the largest group $F_{0,1,0,2}=\langle y_0,y_1,y_2,y_3,y_4,y_5,y_6\rangle$. Similarly, exclude the outer automorphisms,  and one only needs to check the elements in $F'_{0,1,0,2}=\langle y_1,y_2,y_3,y_4,y_5,y_6\rangle$.

In order to find the elements in $F'_{0,1,0,2}$ which are conjugate to $\sigma_2$, the author uses a function constructed in \cite[Definition 6.1]{Y}. Define\[\mu:F'_{0,1,0,2}\longrightarrow\{\pm1\}\]given by\[\mu(y)=\begin{cases}-1,\textrm{if }y\sim_G\sigma_1,\\1,\textrm{if }y\sim_G\sigma_2\textrm{ or }y=1.\end{cases}\]

By \cite[Lemma 2.13 \& Lemma 6.6]{Y}, there is another equivalent definition for the function $\mu$ on $F'_{0,1,0,2}$. For $y\in F'_{0,1,0,2}\subseteq(G_0)^{\sigma_4}\cong\mathrm{Sp}(4)/\langle-I_4\rangle$, choose $A\in\mathrm{Sp}(4)$ representing $y$, and then $A^2=\epsilon_AI_4$ for some $\epsilon_A=\pm1$. Obviously, the value $\epsilon_A$ does not depend on the choice of $A$. Then define $\mu(x):=\epsilon_A$.

\begin{lemma}\label{19}
The values of the function $\mu$ on $F'_{0,1,0,2}$ are given as follows:
\begin{enumerate}[(i)]
\item $\mu(y_1)=\mu(y_2)=\mu(y_1y_2)=\mu(y_3y_4)=\mu(y_5y_6)=-1$ and $\mu(y_3)=\mu(y_4)=\mu(y_5)=\mu(y_6)=1$.
\item For $x,y\in F'_{0,1,0,2}$, if $xy=yx$, then $\mu(xy)=\mu(x)\mu(y)$. In particular, if $x\in\langle y_1,y_2\rangle$, $y\in\langle y_3,y_4\rangle$, and $z\in\langle y_5,y_6\rangle$, then $\mu(xyz)=\mu(x)\mu(y)\mu(z)$.
\end{enumerate}
\end{lemma}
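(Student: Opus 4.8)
The plan is to work with the second, matrix-level description of $\mu$ recalled just before the statement: for $y\in F'_{0,1,0,2}$ choose any lift $A\in\mathrm{Sp}(4)$, so that $A^2=\mu(y)I_4$, the sign being independent of the lift since $(-A)^2=A^2$. Part (i) is then pure computation. Taking for each of the nine elements the obvious product of the generating matrices and squaring, the three quaternionic elements give $\mu=-1$ at once, since $y_1^2=(\mathbf{i}I_4)^2=-I_4$, $y_2^2=(\mathbf{j}I_4)^2=-I_4$, and $y_1y_2=\mathbf{k}I_4$ with $(\mathbf{k}I_4)^2=-I_4$. The four single real generators each square to $I_4$ (the diagonal involutions $y_3,y_5$ and the permutation involutions $y_4,y_6$), giving $\mu=1$. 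The two mixed real products are the substantive cases: $y_3y_4=\left(\begin{smallmatrix}0&-I_2\\I_2&0\end{smallmatrix}\right)$ and, after multiplying out, $y_5y_6$ is block diagonal with two copies of $\left(\begin{smallmatrix}0&1\\-1&0\end{smallmatrix}\right)$; each squares to $-I_4$, so $\mu(y_3y_4)=\mu(y_5y_6)=-1$. This settles (i).

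For (ii) the algebraic engine is that squaring is multiplicative on commuting lifts: if $A,B\in\mathrm{Sp}(4)$ lift $x,y$ and $AB=BA$, then $(AB)^2=ABAB=A^2B^2=\mu(x)\mu(y)I_4$, so $\mu(xy)=\mu(x)\mu(y)$; well-definedness of $\mu$ then transports this to the quotient. I would stress that the commutativity hypothesis must be read at the level of the chosen matrix lifts, not in $\mathrm{Sp}(4)/\langle-I_4\rangle$ where every pair commutes: indeed $y_3$ and $y_4$ have anti-commuting lifts ($y_4y_3=-y_3y_4$), matching $\mu(y_3y_4)=-1\neq\mu(y_3)\mu(y_4)$ from (i). Hence the real content of the ``in particular'' clause is the pairwise commutativity, as honest matrices, of representatives drawn from the three blocks $\langle y_1,y_2\rangle$, $\langle y_3,y_4\rangle$, and $\langle y_5,y_6\rangle$.

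This commutativity I would verify on generators. The elements $y_1,y_2$ are scalar quaternions and so commute with every purely real matrix, in particular with $y_3,\dots,y_6$; and a short direct check shows each of $y_3,y_4$ commutes with each of $y_5,y_6$ (for example $y_3y_6=y_6y_3$ and $y_4y_5=y_5y_4$, while $y_4$ and $y_6$ are commuting Klein-four permutation matrices). Granting that the three blocks commute pairwise as matrices, applying the multiplicative law twice to lifts $A_xA_yA_z$ of $x,y,z$ gives $(A_xA_yA_z)^2=A_x^2A_y^2A_z^2=\mu(x)\mu(y)\mu(z)I_4$, that is $\mu(xyz)=\mu(x)\mu(y)\mu(z)$.

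The step I expect to be the true obstacle is not any single squaring but the bookkeeping around which commutativity is in force: the identity $\mu(xy)=\mu(x)\mu(y)$ fails at the level of the quotient, so everything hinges on checking, for the concrete generators, that lifts from distinct blocks genuinely commute as $4\times4$ matrices. Once that cross-block commutativity is confirmed, the lemma reduces to squaring a handful of explicit matrices.
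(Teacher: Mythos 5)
Your proof is correct and takes the same route as the paper, whose entire proof of this lemma is the remark that the conclusion follows immediately from the equivalent (lift-squaring) definition of $\mu$ --- precisely the computations you spell out. Your further observation that the hypothesis $xy=yx$ in (ii) must be read as commutativity of lifts in $\mathrm{Sp}(4)$ rather than in the quotient (the group $F'_{0,1,0,2}$ itself being abelian, yet $\mu(y_3y_4)=-1\neq\mu(y_3)\mu(y_4)$) is a correct and worthwhile clarification of a point the paper leaves implicit.
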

\begin{proof}
The conclusion follows from the equivalent definition of $\mu$ immediately.
\end{proof}
\begin{lemma}\label{20}
There are totally 27 elements in $F'_{0,1,0,2}$, which are conjugate to $\sigma_2$ in $G$.
\end{lemma}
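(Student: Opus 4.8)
The plan is to combine the product structure of $F'_{0,1,0,2}$ with the explicit values of $\mu$ from \lemmaref{19}. First I would record that every non-identity element of $F'_{0,1,0,2}$ is an \emph{inner} involution of $\mathfrak{u}_0$, since it lies in $G_0=\mathrm{Int}\,\mathfrak{u}_0$; hence it is conjugate in $G$ either to $\sigma_1$ or to $\sigma_2$, these being the only two conjugacy classes of inner involutions (the classes of $\sigma_3,\sigma_4$ being outer). By the first definition of $\mu$ the two classes are separated exactly by $\mu=-1$ and $\mu=+1$. Therefore the number to be found equals $\#\{g\in F'_{0,1,0,2}:\mu(g)=1\}-1$, the $-1$ removing the identity.

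Next I would use the decomposition $F'_{0,1,0,2}=\langle y_1,y_2\rangle\cdot\langle y_3,y_4\rangle\cdot\langle y_5,y_6\rangle$ as an internal product of three commuting Klein four subgroups, each of order $4$, so that $|F'_{0,1,0,2}|=64$. Writing $g=xyz$ with $x\in\langle y_1,y_2\rangle$, $y\in\langle y_3,y_4\rangle$, $z\in\langle y_5,y_6\rangle$, the ``in particular'' clause of \lemmaref{19} supplies the cross-block multiplicativity $\mu(g)=\mu(x)\mu(y)\mu(z)$. From \lemmaref{19}(i) the $\mu$-distribution on each factor is then read off: on $\langle y_1,y_2\rangle$ there is one value $+1$ (the identity) and three values $-1$, while on each of $\langle y_3,y_4\rangle$ and $\langle y_5,y_6\rangle$ there are three values $+1$ and one value $-1$.

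Finally I would count: $\mu(g)=1$ precisely when an even number of the three factors contribute $-1$. Summing over the zero- and two-minus cases gives $1\cdot3\cdot3+3\cdot1\cdot3+3\cdot3\cdot1+1\cdot1\cdot1=28$ elements with $\mu=1$, and subtracting the identity yields $27$.

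The subtle point to get right, and the only real obstacle, is that $\mu$ is \emph{not} a homomorphism on all of $F'_{0,1,0,2}$: within a single Klein four factor the two chosen generators have anticommuting lifts in $\mathrm{Sp}(4)$, so there $\mu(xy)=-\mu(x)\mu(y)$, which is exactly why products such as $y_1y_2$, $y_3y_4$, $y_5y_6$ fail the naive multiplicative rule in \lemmaref{19}(i). Treating $\mu$ as a character would wrongly make $\{\mu=1\}$ a subgroup of index $2$ and give $2^5-1=31$. The multiplicativity I may legitimately invoke is only the cross-block one, and the entire count must be organized around the three-factor splitting above.
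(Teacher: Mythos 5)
Your proof is correct and takes essentially the paper's route: the paper likewise identifies the desired set as $\mu^{-1}(1)\setminus\{1\}$ and applies \lemmaref{19}, except that it simply writes out the $27$ elements explicitly, which is exactly the enumeration your three-block parity count $1\cdot3\cdot3+3\cdot1\cdot3+3\cdot3\cdot1+1\cdot1\cdot1=28$ (minus the identity) organizes and reproduces. Your closing caveat --- that $\mu$ is multiplicative only across the factors $\langle y_1,y_2\rangle$, $\langle y_3,y_4\rangle$, $\langle y_5,y_6\rangle$ and not within them --- is also the right reading of \lemmaref{19}(ii), whose hypothesis ``$xy=yx$'' can only sensibly refer to commuting lifts in $\mathrm{Sp}(4)$, since $F'_{0,1,0,2}$ itself is abelian and a literal reading would contradict \lemmaref{19}(i).
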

\begin{proof}
According to the definition of $\mu$, elements conjugate to $\sigma_2$ in $F'_{0,1,0,2}$ are exactly $\mu^{-1}(1)\setminus\{1\}$. Thus, by \lemmaref{19}, there are 27 such elements: $y_3$, $y_4$, $y_5$, $y_6$, $y_3y_5$, $y_3y_6$, $y_4y_5$, $y_4y_6$, $y_1y_3y_4$, $y_1y_5y_6$, $y_2y_3y_4$, $y_2y_5y_6$, $y_1y_2y_3y_4$, $y_1y_2y_5y_6$, $y_1y_3y_4y_5$, $y_1y_3y_4y_6$, $y_1y_3y_5y_6$, $y_1y_4y_5y_6$, $y_2y_3y_4y_5$, $y_2y_3y_4y_6$, $y_2y_3y_5y_6$, $y_2y_4y_5y_6$, $y_3y_4y_5y_6$, $y_1y_2y_3y_4y_5$, $y_1y_2y_3y_4y_6$, $y_1y_2y_3y_5y_6$, and $y_1y_2y_4y_5y_6$.
\end{proof}
\begin{proposition}\label{21}
There are 7 conjugacy classes of the elementary abelian 2-subgroups of rank 3 in $G$, which contain elements conjugate to $\sigma_2$, represented by $\langle x_0,x_1,x_4\rangle$, $\langle x_0,x_4,x_5\rangle$, $\langle x_1,x_2,x_4\rangle$, $\langle x_1,x_4,x_5\rangle$, $\langle y_0,y_3,y_4\rangle$, $\langle y_1,y_3,y_4\rangle$, and $\langle y_3,y_4,y_5\rangle$ respectively.
\end{proposition}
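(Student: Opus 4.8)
The plan is to leverage the classification of elementary abelian $2$-subgroups of $G=\mathrm{Aut}\mathfrak{u}_0$ recalled from \cite{Y}: every such subgroup is conjugate to a member of one of the four families $F_{r,s}$, $F'_{r,s}$, $F_{u,v,r,s}$, $F'_{u,v,r,s}$. Thus classifying the rank-$3$ subgroups that contain an element conjugate to $\sigma_2$ reduces to a finite, explicit search among the rank-$3$ members of these families. First I would compute the rank of each family member directly from the matrix models: the generators $x_1,\dots,x_5$ live in $\mathrm{Sp}(3)\times\mathrm{Sp}(1)/\langle(-I_3,-1)\rangle$ and $y_1,\dots,y_6$ in $\mathrm{Sp}(4)/\langle-I_4\rangle$, so $\mathbb{F}_2$-independence can be read off by projecting to the scalar, diagonal, and permutation parts, while the outer generators $x_0=\sigma_3$ and $y_0=\sigma_4$ are independent of all inner elements. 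This isolates exactly which parameters $(r,s)$ and $(u,v,r,s)$ yield rank $3$.

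Next I would decide, for each rank-$3$ member, whether it contains an element conjugate to $\sigma_2$. In the $\sigma_3$-branch this is immediate from \lemmaref{18}: within $F_{2,3}$ the only elements conjugate to $\sigma_2$ are $x_4$, $x_5$, $x_4x_5$, so a subgroup qualifies precisely when it meets $\{x_4,x_5,x_4x_5\}$. In the $\sigma_4$-branch I would use the function $\mu$ of \lemmaref{19} together with the list of $27$ elements in \lemmaref{20}. Since $\sigma_2$ is inner, any outer element (a nontrivial product involving $y_0$) fails to be conjugate to $\sigma_2$, so one only tests the inner part against $\mu^{-1}(1)\setminus\{1\}$, and the multiplicativity of $\mu$ makes this a short computation on each rank-$3$ generating set.

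Finally I would sort the surviving subgroups into conjugacy classes. The decisive invariants are whether a subgroup contains an outer automorphism (necessarily conjugate to $\sigma_3$ or $\sigma_4$) and the configuration of its inner involutions --- in particular the number of elements conjugate to $\sigma_2$ and whether those elements, together with the identity, form a subgroup. Tabulating these invariants separates the candidates and should pin down the seven representatives $\langle x_0,x_1,x_4\rangle$, $\langle x_0,x_4,x_5\rangle$, $\langle x_1,x_2,x_4\rangle$, $\langle x_1,x_4,x_5\rangle$ from the $\sigma_3$-branch and $\langle y_0,y_3,y_4\rangle$, $\langle y_1,y_3,y_4\rangle$, $\langle y_3,y_4,y_5\rangle$ from the $\sigma_4$-branch, while discarding the families whose only nontrivial involutions are conjugate to $\sigma_1$, such as $\langle x_1,x_2,x_3\rangle$ and $\langle y_0,y_1,y_2\rangle$.

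The hard part will be the last step: making the conjugacy accounting airtight rather than merely matching numerical invariants. Matching invariants only proves that two subgroups are \emph{not} conjugate; to guarantee that no two of the seven coincide and that no further class is missed, I would need to control how the rank-$3$ members of the $\sigma_3$-branch and of the $\sigma_4$-branch relate to one another, since several of them carry the same $\sigma_2$-count and outer-content data. Resolving these potential coincidences --- either by invoking the non-conjugacy content of \cite{Y} at exactly the right level of the classification, or by exhibiting explicit conjugating elements of $G$ --- is where I expect the main obstacle, and the main risk of over- or under-counting, to lie.
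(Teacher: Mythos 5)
Your proposal follows essentially the same route as the paper: the paper's proof likewise reduces the statement to a finite check over the rank-$3$ members of Yu's families and uses \lemmaref{18} and \lemmaref{20} to decide which of them contain elements conjugate to $\sigma_2$ --- exactly your first two steps. The ``hard part'' you anticipate in your third step is, however, a non-issue: the paper disposes of it by directly citing \cite[Proposition 6.3 \& Proposition 6.5]{Y}, which assert both that every elementary abelian $2$-subgroup of $G$ is conjugate to a member of the families $F_{r,s}$, $F'_{r,s}$, $F_{u,v,r,s}$, $F'_{u,v,r,s}$ \emph{and} that all groups in these families are pairwise non-conjugate; since the seven listed representatives are themselves family members and conjugation preserves the property of containing an element conjugate to $\sigma_2$, both completeness and distinctness are immediate, so your invariant-sorting machinery and explicit conjugating elements are unnecessary --- invoking the non-conjugacy content of \cite{Y}, the first of your two proposed options, is precisely the paper's entire proof.
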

\begin{proof}
This follows from \lemmaref{18}, \lemmaref{20}, and \cite[Proposition 6.3 \& Proposition 6.5]{Y} immediately.
\end{proof}
\subsection{Klein four symmetric subalgebras of $\mathfrak{e}_{6(-14)}$}
Let $\mathfrak{g}_0$ be a non-compact simple Lie algebra. For each involutive automorphism $\sigma$ of $\mathfrak{g}_0$, there exists a Cartan involution $\theta$ of $\mathfrak{g}_0$, which commutes with $\sigma$. Denote by $\mathfrak{g}:=\mathfrak{g}_0+\sqrt{-1}\mathfrak{g}_0$ the complexification of $\mathfrak{g}_0$, and write $\mathfrak{u}_0:=\mathfrak{g}_0^\theta+\sqrt{-1}\mathfrak{g}_0^{-\theta}$ for the compact dual of $\mathfrak{g}_0$. Extend $\sigma$ to the unique holomorphic involutive automorphism of $\mathfrak{g}$, restrict it to $\mathfrak{u}_0$, and $\sigma$ becomes an involutive automorphism of $\mathfrak{u}_0$. Thus, any involutive automorphism $\sigma$ of $\mathfrak{g}_0$ gives a pair $(\mathfrak{u}_0,\sigma)$, where $\mathfrak{u}_0$ is a compact real form of $\mathfrak{g}$ and $\sigma$ is regarded as an involutive automorphism of $\mathfrak{u}_0$. If $\sigma'$ is conjugate to $\sigma$ in $\mathrm{Aut}\mathfrak{g}_0$, which gives another pair $(\mathfrak{u}'_0,\sigma')$, then there exists an element $g\in\mathrm{Aut}\mathfrak{g}$ such that $g\cdot\mathfrak{u}_0=\mathfrak{u}'_0$ and $g^{-1}\sigma g=\sigma'$; namely, the two pairs $(\mathfrak{u}_0,\sigma)$ and $(\mathfrak{u}'_0,\sigma')$ are conjugate by elements in $\mathrm{Aut}\mathfrak{g}$. Thus, up to conjugations of compact real forms of $\mathfrak{g}$, there exists a map\[\pi:\{\textrm{Conjugacy classes of }\mathrm{Aut}\mathfrak{g}_0\}\longrightarrow\{\textrm{Conjugacy classes of }\mathrm{Aut}\mathfrak{u}_0\}\]which is surjective but not necessarily injective.

If $\Gamma$ is a finite abelian subgroup of $\mathrm{Aut}\mathfrak{g}_0$, there exists a Cartan involution $\theta\in\mathrm{Aut}\mathfrak{g}_0$ centralizing $\Gamma$ by \propositionref{2}. In the same way, $\Gamma$ becomes a finite abelian subgroup of $\mathrm{Aut}\mathfrak{u}_0$. Thus, any finite abelian subgroup of $\mathrm{Aut}\mathfrak{g}_0$ is obtained from a finite abelian subgroup of $\mathfrak{u}_0$.

On the other hand, fix an involutive automorphism $\theta$ of $\mathfrak{u}_0$, and by holomorphic extension and restriction, $\theta$ is a Cartan involution of a noncompact dual $\mathfrak{g}_0$ of $\mathfrak{u}_0$. Let $\Theta:\mathrm{Aut}\mathfrak{u}_0\rightarrow\mathrm{Aut}\mathfrak{u}_0$ be given by $f\mapsto\theta^{-1}f\theta$, whose differential is $\theta$ on $\mathfrak{u}_0$. Suppose that $\Gamma_1$ and $\Gamma_2$ are two finite abelian subgroups of $\mathrm{Aut}\mathfrak{u}_0$, which are centralized by $\theta$. If $\Gamma_2=g^{-1}\Gamma_1g$ for some $g\in(\mathrm{Aut}\mathfrak{u}_0)^\Theta$, then $\Gamma_1$ and $\Gamma_2$ is conjugate in $\mathrm{Aut}\mathfrak{g}_0$ because $(\mathrm{Aut}\mathfrak{u}_0)^\Theta$ is contained in $\mathrm{Aut}\mathfrak{g}_0$.

Thus, for a compact Lie algebra $\mathfrak{u}_0$, a pair $(\theta,\Gamma)$ with $\theta$ an involutive automorphism of $\mathfrak{u}_0$ and $\Gamma$ a Klein four subgroup of $\mathrm{Aut}\mathfrak{u}_0$ such that $\theta\notin\Gamma$ and $\theta$ centralizes $\Gamma$, gives a Klein four symmetric subalgebra of the noncompact Lie algebra $\mathfrak{g}_0=\mathfrak{u}_0^\theta+\sqrt{-1}\mathfrak{u}_0^{-\theta}$. Moreover, if two such pairs are conjugate $\mathrm{Aut}\mathfrak{u}_0$, then they give a same Klein four symmetric pair up to isomorphism. Here, the pairs $(\theta_1,\Gamma_1)$ and $(\theta_2,\Gamma_2)$ are said to be conjugate in $\mathrm{Aut}\mathfrak{u}_0$ if there exists an element $g\in\mathrm{Aut}\mathfrak{u}_0$ such that $\theta_2=g^{-1}\theta_1g$ and $\Gamma_2=g^{-1}\Gamma_1g$.

In order to find all the Klein four symmetric pair of holomorphic type for $\mathfrak{g}_0=\mathfrak{e}_{6(-14)}$, according to the argument above, the author needs to find all the conjugacy classes of the pairs $(\theta,\Gamma)$ with $\theta\in G$ involutive automorphisms and $\Gamma\subseteq G$ Klein four subgroups such that
\begin{enumerate}[(i)]
\item $\theta\notin\Gamma$;
\item $\theta\sim_G\sigma_2$;
\item $\theta$ centralizes $\Gamma$;
\item every element $\sigma\in\Gamma$ is identity on the center of $\mathfrak{u}_0^\theta$; namely, each $\sigma\in\Gamma$ gives a symmetric pair of holomorphic type for $\mathfrak{g}_0=\mathfrak{u}_0^\theta+\sqrt{-1}\mathfrak{u}_0^{-\theta}$.
\end{enumerate}
Because of the items (i), (ii), and (iii), one knows immediately that each subgroup generated by such $\theta$ and $\Gamma$ must be contained in the some elementary abelian 2-subgroup of rank 3 in $G$, i.e., one of the 7 subgroups listed in \propositionref{21} up to conjugation.

On the other hand, notice that $\pi$ is a surjection from the set of conjugacy classes of $\mathfrak{g}_0$ to the set of conjugacy classes of $\mathfrak{u}_0$. According to the classification of symmetric pairs, it is known that the cardinality of pre-image $|\pi^{-1}([\sigma_1])|$ of $[\sigma_1]$ is 2, while that of $[\sigma_2]$ is 3. Write $\sigma_{1,1}$ and $\sigma_{1,2}$ for the two representatives of the two conjugacy classes in $\mathrm{Aut}\mathfrak{g}_0$ corresponding to $[\sigma_1]$, and write $\sigma_{2,1}$, $\sigma_{2,2}$, and $\sigma_{2,3}$ for the two representatives of the two conjugacy classes in $\mathrm{Aut}\mathfrak{g}_0$ corresponding to $[\sigma_2]$. Here, $\mathfrak{g}_0^{\sigma_{1,1}}=\mathfrak{su}(4,2)\oplus\mathfrak{su}(2)$, $\mathfrak{g}_0^{\sigma_{1,2}}=\mathfrak{su}(5,1)\oplus\mathfrak{sl}(2,\mathbb{R})$, $\mathfrak{g}_0^{\sigma_{2,1}}=\mathfrak{s0}(8,2)\oplus\mathfrak{so}(2)$, $\mathfrak{g}_0^{\sigma_{2,2}}=\mathfrak{so}^*(10)\oplus\mathfrak{so}(2)$, and $\mathfrak{g}_0^{\sigma_{2,3}}=\mathfrak{so}(10)\oplus\mathfrak{so}(2)$. According to the classification of symmetric pairs of holomorphic type \cite[Table C.2]{KO}, $\sigma_{1,1}$, $\sigma_{1,2}$, $\sigma_{2,1}$, $\sigma_{2,2}$, and $\sigma_{2,3}$ give all the symmetric pairs of holomorphic type for $\mathfrak{g}_0$. Hence, taking the item (iii) into consideration, there is no need to consider the elementary abelian 2-subgroups of rank 3 in $G$, which contain outer automorphisms. Thus, a pair $(\theta,\Gamma)$ satisfies the above four requirements if and only if $\theta\sim_G\sigma_2$ and the group $\langle\theta,\Gamma\rangle$ generated by $\theta$ and the elements in $\Gamma$ is conjugate to one of following abelian elementary 2-subgroups of rank 3:
\begin{enumerate}[(i)]
\item $\langle x_1,x_2,x_4\rangle$,
\item $\langle x_1,x_4,x_5\rangle$,
\item $\langle y_1,y_3,y_4\rangle$,
\item $\langle y_3,y_4,y_5\rangle$.
\end{enumerate}
In order to compute the explicit Klein four symmetric pairs of holomorphic type for $\mathfrak{g}_0$, for each elementary abelian 2-subgroup of rank 3 listed above, the author needs to take three steps.
\begin{enumerate}[Step 1:]
\item Determine the conjugacy classes $(\theta,\Gamma)$ in $\mathrm{Aut}\mathrm{u}_0$;
\item Compute $\mathfrak{u}_0^\Gamma$ which is the compact dual of $\mathfrak{g}_0^\Gamma$;
\item Compute $\mathfrak{u}_0^{\langle\theta,\Gamma\rangle}$ which is a maximal compact subalgebra of $\mathfrak{g}_0$.
\end{enumerate}
After the three steps, the Klein four symmetric subalgebras $\mathfrak{g}_0^\Gamma$ are totally determined.
\begin{lemma}\label{22}
Every $(\theta,\Gamma)$ in $\mathrm{Aut}\mathrm{u}_0$, such that $\theta\sim_G\sigma_2$ and the group $\langle\theta,\Gamma\rangle$ generated by $\theta$ and $\Gamma$ is conjugate to one of the above 4 elementary abelian 2-subgroups of rank 3, is conjugate to one of the following pairs.
\begin{enumerate}[(i)]
\item $\langle x_1,x_2,x_4\rangle$:
\begin{enumerate}[$\bullet$]
\item $\theta=x_4$, $\Gamma=\langle x_1,x_2\rangle$.
\end{enumerate}
\item $\langle x_1,x_4,x_5\rangle$:
\begin{enumerate}[$\bullet$]
\item $\theta=x_5$, $\Gamma=\langle x_1,x_4\rangle$.
\end{enumerate}
\item $\langle y_1,y_3,y_4\rangle$:
\begin{enumerate}[$\bullet$]
\item $\theta=y_3$, $\Gamma=\langle y_1,y_4\rangle$;
\item $\theta=y_3$, $\Gamma=\langle y_1y_3,y_4\rangle$;
\item $\theta=y_3$, $\Gamma=\langle y_1y_3,y_3y_4\rangle$;
\end{enumerate}
\item $\langle y_3,y_4,y_5\rangle$:
\begin{enumerate}[$\bullet$]
\item $\theta=y_5$, $\Gamma=\langle y_3,y_4\rangle$;
\item $\theta=y_3$, $\Gamma=\langle y_4,y_5\rangle$;
\item $\theta=y_3$, $\Gamma=\langle y_3y_4,y_5\rangle$;
\item $\theta=y_3$, $\Gamma=\langle y_4,y_3y_5\rangle$.
\end{enumerate}
\end{enumerate}
\end{lemma}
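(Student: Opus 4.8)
The plan is to treat each of the four rank-$3$ elementary abelian $2$-subgroups
$E\in\{\langle x_1,x_2,x_4\rangle,\langle x_1,x_4,x_5\rangle,\langle y_1,y_3,y_4\rangle,\langle y_3,y_4,y_5\rangle\}$
separately. By \propositionref{21} (via \cite{Y}) these four groups are pairwise non-conjugate in $G=\mathrm{Aut}\mathfrak{u}_0$, and the hypotheses $\theta\sim_G\sigma_2$, $\theta\notin\Gamma$, $\theta$ centralizes $\Gamma$, together with the exclusion of outer automorphisms, force $\langle\theta,\Gamma\rangle$ to be conjugate to one of them. Hence a pair living in one $E$ can never be conjugate to a pair living in another, and it suffices to classify, inside each fixed $E$, the pairs $(\theta,\Gamma)$ with $\theta$ of $\sigma_2$-type, $\theta\notin\Gamma$, and $\langle\theta,\Gamma\rangle=E$, up to the action on $E$ of the finite group $W(E):=N_G(E)/C_G(E)$.

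First I would record, for each $E$, the partition of $E\setminus\{1\}$ into $\sigma_1$-type and $\sigma_2$-type elements, since the admissible values of $\theta$ are exactly the $\sigma_2$-type elements. For the two $x$-families this is read off from \lemmaref{18}: in $\langle x_1,x_2,x_4\rangle$ only $x_4$ is of $\sigma_2$-type, whereas in $\langle x_1,x_4,x_5\rangle$ the $\sigma_2$-type elements are exactly $x_4,x_5,x_4x_5$. For the two $y$-families I would use the function $\mu$ of \lemmaref{19} together with \lemmaref{20}: in $\langle y_1,y_3,y_4\rangle$ the $\sigma_2$-type elements are $y_3,y_4,y_1y_3y_4$, while in $\langle y_3,y_4,y_5\rangle$ they are $y_3,y_4,y_5,y_3y_5,y_4y_5$.

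Next I would determine the $W(E)$-orbits of the $\sigma_2$-type elements, so as to fix one representative $\theta$ per orbit. The key structural remark is that $\mu$ is governed by a quadratic form $q$ on $E$, whose associated bilinear form $b(u,v)$ records whether lifts of $u,v$ to $\mathrm{Sp}(3)\times\mathrm{Sp}(1)$ or $\mathrm{Sp}(4)$ commute or anticommute, and that $W(E)$ sits inside the orthogonal group $O(q)$. This makes the radical of $b$ a distinguished line: for $\langle y_3,y_4,y_5\rangle$ the element $y_5$ spans the radical and therefore cannot be $W(E)$-conjugate to $y_3$ or $y_4$, which is exactly why the representatives $\theta=y_5$ and $\theta=y_3$ must be listed separately; in the other three groups the $\sigma_2$-type elements usable as $\theta$ form a single $W(E)$-orbit, and $\theta$ may be normalized to $x_4$, $x_5$, and $y_3$ respectively. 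The conjugacies realizing these identifications would be produced explicitly, exactly as in the proof of \lemmaref{18}, by quaternion conjugations and by symplectic coordinate permutations inside $\mathrm{Sp}(3)\times\mathrm{Sp}(1)$ or $\mathrm{Sp}(4)$; alternatively one may invoke the description of $N_G(E)/C_G(E)$ extracted from \cite{Y}. Finally, for each fixed representative $\theta$ I would enumerate the rank-$2$ subgroups $\Gamma$ with $\theta\notin\Gamma$, i.e.\ the four hyperplanes of $E$ avoiding the point $\theta$, and separate them into $\mathrm{Stab}_{W(E)}(\theta)$-orbits. After $\theta$ is fixed, the complete invariant is the number of $\sigma_2$-type elements contained in $\Gamma$ (equivalently the multiset of $\sigma$-types of its three non-identity elements): two such $\Gamma$ with equal invariant are conjugate by an explicit element of $\mathrm{Stab}_{W(E)}(\theta)$, while ones with different invariants are non-conjugate because $W(E)$ preserves $\mu$. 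For $\langle x_1,x_2,x_4\rangle$ and $\langle x_1,x_4,x_5\rangle$ all four $\Gamma$ share a single invariant, giving one pair each; for $\langle y_1,y_3,y_4\rangle$ with $\theta=y_3$ the $\sigma_2$-counts take the values $0,1,1,2$, collapsing to the three listed pairs; and for $\langle y_3,y_4,y_5\rangle$ the orbit $\theta=y_5$ contributes one pair and $\theta=y_3$ contributes three, for four in total. Assembling the four cases yields precisely the stated list.

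The delicate step is the orbit analysis: $W(E)$ is in general a \emph{proper} subgroup of $O(q)$, so an abstract isometry matching two elements, or two subgroups, of $E$ need not be induced by an actual automorphism of $\mathfrak{u}_0$. The main work is therefore to ensure that every claimed conjugacy is realized by a genuine element of $N_G(E)$---built by hand as a quaternionic or symplectic matrix, or read off from \cite{Y}---and, dually, to exclude spurious identifications; the $\mu$-preserving invariants above are what guarantee that the listed representatives remain pairwise distinct.
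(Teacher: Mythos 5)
Your proposal is correct, and its skeleton coincides with the paper's proof: both work case by case inside the four rank-$3$ groups, reduce conjugacy of pairs to the action of $N_G(E)$ on $E$ (legitimate, since any element of $G$ conjugating one generating pair to another must normalize $E$), use preservation of the $\sigma_1$/$\sigma_2$-type of each element as the basic invariant, and realize type-preserving abstract automorphisms of $E$ by actual elements of $G$. The realization step you flag as ``the main work'' is handled in the paper by \cite[Proposition 6.9]{Y}: every automorphism $f$ of $E$ with $f(x)\sim_G x$ for all $x\in E$ is induced by conjugation in $G$; in particular your worry that $W(E)$ might be a proper subgroup of $O(q)$ evaporates for these groups, since that proposition shows $W(E)$ is exactly the group of $\mu$-preserving automorphisms of $E$. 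The genuine divergence is in how non-conjugacy is certified. The paper separates the listed pairs by \cite[Table 4]{HY}, i.e.\ by the $G$-conjugacy class of $\Gamma$ alone; this invariant cannot distinguish $(y_5,\langle y_3,y_4\rangle)$ from $(y_3,\langle y_4,y_3y_5\rangle)$ (the two subgroups $\langle y_3,y_4\rangle$ and $\langle y_4,y_3y_5\rangle$ are conjugate), and the paper's proof accordingly leaves open whether these two pairs coincide. Your radical argument decides it: since $\mu$ is constant on $G$-conjugacy classes, $q=(1-\mu)/2$ is $W(E)$-invariant, hence so are its polarization $b(u,v)=q(uv)+q(u)+q(v)$ and the radical of $b$; by \lemmaref{19} one has, on $\langle y_3,y_4,y_5\rangle$, $b(y_3,y_4)=1$ and $b(y_3,y_5)=b(y_4,y_5)=0$, so the radical is exactly $\langle y_5\rangle$, whence no element of $N_G(E)$ can carry $y_5$ to $y_3$ and the two pairs are \emph{not} conjugate. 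Thus your route proves slightly more than the paper's proof does, namely that the list in case (iv) is irredundant. Your bookkeeping also checks out: the $\sigma_2$-type elements are as you state, the $\sigma_2$-count multisets of the four admissible $\Gamma$'s are $\{0,0,0,0\}$, $\{1,1,1,1\}$, $\{0,1,1,2\}$, and, in the last group, $\{2,2,2,2\}$ for $\theta=y_5$ and $\{1,2,2,3\}$ for $\theta=y_3$, so the orbit count $1+1+3+(1+3)=9$ matches the lemma's list, with the collapses (e.g.\ $\langle y_1,y_4\rangle\sim\langle y_1,y_3y_4\rangle$ under the stabilizer of $y_3$) realizable by explicit $\mu$-preserving automorphisms and \cite[Proposition 6.9]{Y}.
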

\begin{proof}
If the group $\langle\theta,\Gamma\rangle$ is conjugate to $\langle x_1,x_2,x_4\rangle$, then the pair $(\theta,\Gamma)$ is conjugate to one of $(x_4,\langle x_1,x_2\rangle)$, $(x_4,\langle x_1x_4,x_2\rangle)$, $(x_4,\langle x_1,x_2x_4\rangle)$, and $(x_4,\langle x_1x_4,x_2x_4\rangle)$. The author needs to show that they are all conjugate. In fact, there is an automorphism $f$ of the group $\langle x_1,x_2,x_4\rangle$, given by $f(x_1)=x_1x_4$, $f(x_2)=x_2$, and $f(x_4)=x_4$. By \lemmaref{18}, it is immediate that $f(x)\sim_Gx$ for all $x\in\langle x_1,x_2,x_4\rangle$. Then by \cite[Proposition 6.9]{Y}, there exists an element $g\in G$ such that $f(x)=g^{-1}xg$ for all $x\in\langle x_1,x_2,x_4\rangle$, and this shows that $(x_4,\langle x_1,x_2\rangle)$ and $(x_4,\langle x_1x_4,x_2\rangle)$ are conjugate. Similarly, both $(x_4,\langle x_1x_4,x_2\rangle)$ and $(x_4,\langle x_1,x_2x_4\rangle)$ are conjugate to $(x_4,\langle x_1,x_2\rangle)$.

If the group $\langle\theta,\Gamma\rangle$ is conjugate to $\langle x_1,x_4,x_5\rangle$, apply \lemmaref{18} and \cite[Proposition 6.9]{Y} in the same way, one shows that $(\theta,\Gamma)$ is conjugate to $(x_5,\langle x_1,x_4\rangle)$.

If $\langle\theta,\Gamma\rangle$ is conjugate to $\langle y_1,y_3,y_4\rangle$, then the pair $(\theta,\Gamma)$ is conjugate to one of $(y_3,\langle y_1,y_4\rangle)$, $(y_3,\langle y_1y_3,y_4\rangle)$, $(y_3,\langle y_1,y_3y_4\rangle)$, $(y_3,\langle y_1y_3,y_3y_4\rangle)$, $(y_4,\langle y_1,y_3\rangle)$, $(y_4,\langle y_1y_4,y_3\rangle)$, $(y_4,\langle y_1,y_3y_4\rangle)$, $(y_4,\langle y_1y_4,y_3y_4\rangle)$, $(y_1y_3y_4,\langle y_1,y_3\rangle)$, $(y_1y_3y_4,\langle y_1,y_4\rangle)$, $(y_1y_3y_4,\langle y_3,y_4\rangle)$, and $(y_1y_3y_4,\langle y_1y_4,y_3y_4\rangle)$. One may use \lemmaref{20} and \cite[Proposition 6.9]{Y} to show that the pairs in each of the following items are conjugate:
\begin{enumerate}[$\bullet$]
\item $(y_3,\langle y_1,y_4\rangle)$, $(y_3,\langle y_1,y_3y_4\rangle)$, $(y_4,\langle y_1,y_3\rangle)$, $(y_4,\langle y_1,y_3y_4\rangle)$, $(y_1y_3y_4,\langle y_1,y_3\rangle)$, and $(y_1y_3y_4,\langle y_1,y_4\rangle)$;
\item $(y_3,\langle y_1y_3,y_4\rangle)$, $(y_4,\langle y_1y_4,y_3\rangle)$, and $(y_1y_3y_4,\langle y_3,y_4\rangle)$;
\item $(y_3,\langle y_1y_3,y_3y_4\rangle)$, $(y_4,\langle y_1y_4,y_3y_4\rangle)$, and $(y_1y_3y_4,\langle y_1y_4,y_3y_4\rangle)$.
\end{enumerate}
Moreover, since the groups $\langle y_1,y_4\rangle$, $\langle y_1y_3,y_4\rangle$, and $\langle y_1y_3,y_3y_4\rangle$ are pairwisely non-conjugate by \cite[Table 4]{HY}, the three items above correspond to three conjugacy classes exactly.

If If $\langle\theta,\Gamma\rangle$ is conjugate to $\langle y_3,y_4,y_5\rangle$, in the same way, one shows that $(\theta,\Gamma)$ is conjugate to at least one of $(y_5,\langle y_3,y_4\rangle)$, $(y_3,\langle y_4,y_5\rangle)$, $(y_3,\langle y_3y_4,y_5\rangle)$, and $(y_3,\langle y_4,y_3y_5\rangle)$. Moreover, $(y_5,\langle y_3,y_4\rangle)$, $(y_3,\langle y_4,y_5\rangle)$, $(y_3,\langle y_3y_4,y_5\rangle)$, and $(y_3,\langle y_4,y_3y_5\rangle)$ are pairwisely non-conjugate by \cite[Table 4]{HY} except that $(y_5,\langle y_3,y_4\rangle)$ and $(y_3,\langle y_4,y_3y_5\rangle)$ may be conjugate.
\end{proof}
By \lemmaref{22}, each Klein four symmetric subalgebra of $\mathfrak{g}_0$ is isomorphic to one of $\mathfrak{g}_0^{\langle x_1,x_2\rangle}$, $\mathfrak{g}_0^{\langle x_1,x_4\rangle}$, $\mathfrak{g}_0^{\langle y_1,y_4\rangle}$, $\mathfrak{g}_0^{\langle y_1y_3,y_4\rangle}$, $\mathfrak{g}_0^{\langle y_1y_3,y_3y_4\rangle}$, $\mathfrak{g}_0^{\langle y_3,y_4\rangle}$, $\mathfrak{g}_0^{\langle y_4,y_5\rangle}$, $\mathfrak{g}_0^{\langle y_3y_4,y_5\rangle}$, and $\mathfrak{g}_0^{\langle y_4,y_3y_5\rangle}$, whose compact duals are give according to \cite[Table 4]{HY} as follows.
\begin{enumerate}[$\bullet$]
\item $\mathfrak{u}_0^{\langle x_1,x_2\rangle}\cong\mathfrak{u}_0^{\langle y_1y_3,y_3y_4\rangle}\cong2\mathfrak{su}(3)\oplus2(\sqrt{-1}\mathbb{R})$
\item $\mathfrak{u}_0^{\langle x_1,x_4\rangle}\cong\mathfrak{u}_0^{\langle y_1,y_4\rangle}\cong\mathfrak{u}_0^{\langle y_3y_4,y_5\rangle}\cong\mathfrak{su}(4)\oplus2\mathfrak{su}(2)\oplus\sqrt{-1}\mathbb{R}$
\item $\mathfrak{u}_0^{\langle y_1y_3,y_4\rangle}\cong\mathfrak{u}_0^{\langle y_3,y_4\rangle}\cong\mathfrak{u}_0^{\langle y_4,y_3y_5\rangle}\cong\mathfrak{su}(5)\oplus2(\sqrt{-1}\mathbb{R})$
\item $\mathfrak{u}_0^{\langle y_4,y_5\rangle}\cong\mathfrak{so}(8)\oplus2(\sqrt{-1}\mathbb{R})$
\end{enumerate}
Finally, the author needs to determine all $\mathfrak{u}_0^{\langle\theta,\Gamma\rangle}$ listed in \lemmaref{22}. It is very hard to compute them directly because all $x_i$ and $y_j$ are inner automorphisms taken from $G^{\sigma_3}$ and $G^{\sigma_4}$, but both $\sigma_3$ and $\sigma_4$ are outer automorphisms. However, by \cite[Proposition 6.9]{Y} again, one may find the groups isomorphic to those $\langle\theta,\Gamma\rangle$ in $G^{\sigma_1}$ or $G^{\sigma_2}$.

According to \cite{HY}, it is known that $G^{\sigma_1}\cong(\mathrm{SU}(6)\times\mathrm{Sp}(1)/\langle(e^{\frac{2\pi\sqrt{-1}}{3}}I_6,1),(-I_6,-1)\rangle)\rtimes\langle\tau\rangle$, where $\sigma_1=(I_6,-1)$ and $\tau(X,Y)=(\left(\begin{array}{cc}0&-I_3\\I_3&0\end{array}\right)\overline{X}\left(\begin{array}{cc}0&I_3\\-I_3&0\end{array}\right),Y)$ for $(X,Y)\in\mathfrak{su}(6)\oplus\mathfrak{sp}(1)$, with $\overline{X}$ being the natural complex conjugation of $X$ in $\mathfrak{su}(6)$. Then in $G$, one has $(\left(\begin{array}{cc}-I_2&0\\0&I_4\end{array}\right),1)\sim_G(\left(\begin{array}{cc}\sqrt{-1}I_3&0\\0&-\sqrt{-1}I_3\end{array}\right),\mathbf{i})\sim_G \sigma_1$ and $(\left(\begin{array}{cc}-I_4&0\\0&I_2\end{array}\right),1)\sim_G(\left(\begin{array}{cc}\sqrt{-1}I_5&0\\0&-\sqrt{-1}I_1\end{array}\right),\mathbf{i})\sim_G \sigma_2$. By tedious computations, one has group homomorphisms as follows.
\begin{enumerate}[(i)]
\item $\langle x_1,x_2,x_4\rangle\cong\langle(I_6,-1),(\left(\begin{array}{cc}\sqrt{-1}I_3&0\\0&-\sqrt{-1}I_3\end{array}\right),\mathbf{i}), (\left(\begin{array}{cccc}-I_2&0&0&0\\0&I_1&0&0\\0&0&-I_2&0\\0&0&0&I_1\end{array}\right),1)\rangle$ given by $x_1\mapsto(I_6,-1)$, $x_2\mapsto(\left(\begin{array}{cc}\sqrt{-1}I_3&0\\0&-\sqrt{-1}I_3\end{array}\right),\mathbf{i})$, and $x_4\mapsto(\left(\begin{array}{cccc}-I_2&0&0&0\\0&I_1&0&0\\0&0&-I_2&0\\0&0&0&I_1\end{array}\right),1)$;
\item $\langle x_1,x_4,x_5\rangle\cong\langle(I_6,-1),(\left(\begin{array}{cc}-I_2&0\\0&I_4\end{array}\right),1), (\left(\begin{array}{cc}-I_4&0\\0&I_2\\\end{array}\right),1)\rangle$ given by $x_1\mapsto(I_6,-1)$, $x_1x_4\mapsto(\left(\begin{array}{cc}-I_2&0\\0&I_4\end{array}\right),1)$, and $x_5\mapsto(\left(\begin{array}{cc}-I_4&0\\0&I_2\\\end{array}\right),1)$;
\item $\langle y_1,y_3,y_4\rangle\cong\langle(I_6,-1),(\left(\begin{array}{cc}\sqrt{-1}I_3&0\\0&-\sqrt{-1}I_3\end{array}\right),\mathbf{i}), (\left(\begin{array}{cc}\sqrt{-1}I_5&0\\0&-\sqrt{-1}I_1\end{array}\right),\mathbf{i})\rangle$ given by $y_1y_3\mapsto(I_6,-1)$, $y_1y_4\mapsto(\left(\begin{array}{cc}\sqrt{-1}I_3&0\\0&-\sqrt{-1}I_3\end{array}\right),\mathbf{i})$, and $y_4\mapsto(\left(\begin{array}{cc}\sqrt{-1}I_5&0\\0&-\sqrt{-1}I_1\end{array}\right),\mathbf{i})$;
\item $\langle y_3,y_4,y_5\rangle\cong\langle(I_6,-1),(\left(\begin{array}{cc}-I_4&0\\0&I_2\end{array}\right),1), (\left(\begin{array}{cc}\sqrt{-1}I_5&0\\0&-\sqrt{-1}I_1\end{array}\right),\mathbf{i})\rangle$ given by $y_3y_4\mapsto(I_6,-1)$, $y_5\mapsto(\left(\begin{array}{cc}-I_4&0\\0&I_2\end{array}\right),1)$, and $y_3\mapsto(\left(\begin{array}{cc}\sqrt{-1}I_5&0\\0&-\sqrt{-1}I_1\end{array}\right),\mathbf{i})$.
\end{enumerate}
\begin{lemma}\label{23}
The following isomorphisms of subalgebras in $\mathfrak{u}_0$ hold.
\begin{enumerate}[(i)]
\item $\mathfrak{u}_0^{\langle x_1,x_2,x_4\rangle}\cong2\mathfrak{su}(2)\oplus4(\sqrt{-1}\mathbb{R})$;
\item $\mathfrak{u}_0^{\langle x_1,x_4,x_5\rangle}\cong4\mathfrak{su}(2)\oplus2(\sqrt{-1}\mathbb{R})$;
\item $\mathfrak{u}_0^{\langle y_1,y_3,y_4\rangle}\cong\mathfrak{su}(3)\oplus\mathfrak{su}(2)\oplus3(\sqrt{-1}\mathbb{R})$;
\item $\mathfrak{u}_0^{\langle y_3,y_4,y_5\rangle}\cong\mathfrak{su}(4)\oplus3(\sqrt{-1}\mathbb{R})$.
\end{enumerate}
\end{lemma}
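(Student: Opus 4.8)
The plan is to exploit the four group isomorphisms displayed immediately before the statement, which realize each relevant subgroup $\langle\theta,\Gamma\rangle$ as an explicit subgroup $H$ of $G^{\sigma_1}\cong(\mathrm{SU}(6)\times\mathrm{Sp}(1)/\langle(e^{\frac{2\pi\sqrt{-1}}{3}}I_6,1),(-I_6,-1)\rangle)\rtimes\langle\tau\rangle$, with every generator represented by a block-diagonal matrix in $\mathrm{SU}(6)$ paired with a unit quaternion in $\mathrm{Sp}(1)$. Since by \cite[Proposition 6.9]{Y} these isomorphisms are induced by conjugation in $G$, and conjugate subgroups have isomorphic fixed-point subalgebras, it suffices to compute $\mathfrak{u}_0^H$ for the image subgroup $H$. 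First I would record the branching of $\mathfrak{u}_0=\mathfrak{e}_{6(-78)}$ under the $\sigma_1$-fixed subalgebra $\mathfrak{k}:=\mathfrak{u}_0^{\sigma_1}\cong\mathfrak{su}(6)\oplus\mathfrak{sp}(1)$: the $(-1)$-eigenspace $\mathfrak{p}$ of $\sigma_1$, of dimension $40$, realizes the module $\Lambda^3\mathbb{C}^6\otimes\mathbb{C}^2$, on which $\mathrm{SU}(6)$ acts through $\Lambda^3$ of the standard representation and $\mathrm{Sp}(1)$ acts through the standard $2$-dimensional representation on $\mathbb{C}^2$.

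The decisive simplification is that in every one of the four cases the subgroup $H$ contains the image of $x_1$, $x_1$, $y_1y_3$, or $y_3y_4$, which in each of the four displayed isomorphisms equals $(I_6,-1)$. This element acts trivially on $\mathfrak{k}$, since $I_6$ is central in $\mathrm{SU}(6)$ and conjugation by $-1\in\mathrm{Sp}(1)$ is trivial, but on $\mathfrak{p}$ it acts as $I\otimes(-I_2)=-\mathrm{id}$, because the central element $-1\in\mathrm{Sp}(1)$ acts as $-I_2$ on the standard representation $\mathbb{C}^2$. Hence $\mathfrak{p}^H\subseteq\mathfrak{p}^{(I_6,-1)}=\{0\}$, so $\mathfrak{u}_0^H=\mathfrak{k}^H=\mathfrak{su}(6)^H\oplus\mathfrak{sp}(1)^H$, and the computation collapses onto the two classical factors. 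Moreover, since $(I_6,-1)$ fixes $\mathfrak{k}$ pointwise, $\mathfrak{k}^H$ depends only on the conjugation action of the remaining two generators.

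The remaining work is then routine eigenvalue bookkeeping. For the $\mathfrak{su}(6)$ factor the two relevant generators are represented by commuting diagonal matrices with entries in $\{\pm1,\pm\sqrt{-1}\}$, so their common centralizer in $\mathfrak{su}(6)$ is $\mathfrak{s}(\bigoplus_i\mathfrak{u}(n_i))$, where the $n_i$ are the multiplicities of the joint eigenvalues; reading off the block sizes and applying $\mathfrak{u}(n)\cong\mathfrak{su}(n)\oplus\sqrt{-1}\mathbb{R}$ together with the single trace-zero relation produces the semisimple summands and the correct number of toral factors. For the $\mathfrak{sp}(1)$ factor I would compute the fixed points of conjugation on $\mathrm{Im}\,\mathbb{H}$ by the quaternion parts of the two generators: a nontrivial imaginary unit such as $\mathbf{i}$ fixes only $\mathbb{R}\mathbf{i}\cong\sqrt{-1}\mathbb{R}$, whereas a scalar $\pm1$ fixes all of $\mathfrak{sp}(1)\cong\mathfrak{su}(2)$. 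Carrying this out gives, for instance in (i), the joint blocks of sizes $(2,1,2,1)$ and the quaternion $\mathbf{i}$, so that $\mathfrak{su}(6)^H\cong2\mathfrak{su}(2)\oplus3(\sqrt{-1}\mathbb{R})$ and $\mathfrak{sp}(1)^H\cong\sqrt{-1}\mathbb{R}$, assembling to $2\mathfrak{su}(2)\oplus4(\sqrt{-1}\mathbb{R})$; cases (ii)--(iv) proceed identically. I expect no genuine obstacle beyond correctly identifying the $\sigma_1$-branching and verifying that the distinguished element acts as $-\mathrm{id}$ on $\mathfrak{p}$, which is precisely what forces $\mathfrak{p}^H=\{0\}$ and renders the whole computation elementary.
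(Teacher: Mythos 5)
Your proposal is correct and follows essentially the same route as the paper: both use the displayed group isomorphisms realizing each $\langle\theta,\Gamma\rangle$ inside $G^{\sigma_1}$ together with \cite[Proposition 6.9]{Y} to replace the group by a conjugate copy with explicit matrix--quaternion generators, and then compute fixed points there. The paper compresses the rest into ``the conclusions hold immediately by computations,'' and your reduction --- noting that $(I_6,-1)=\sigma_1$ lies in every image subgroup, hence annihilates $\mathfrak{p}$ and collapses the problem to joint centralizers in $\mathfrak{k}\cong\mathfrak{su}(6)\oplus\mathfrak{sp}(1)$ --- is precisely the computation the paper leaves implicit, and your block-size bookkeeping in all four cases yields the stated answers.
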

\begin{proof}
Based on the group isomorphisms of elementary abelian 2-subgroups of rank 3 in $\mathrm{Aut}\mathfrak{u}_0$, make use of \cite[Proposition 6.9]{Y}, and the conclusions hold immediately by computations.
\end{proof}
\begin{theorem}\label{24}
There are totally 8 Klein for symmetric pairs of holomorphic type for $\mathfrak{e}_{6(-14)}$ up to conjugation:
\begin{enumerate}[(i)]
\item $(\mathfrak{e}_{6(-14)},2\mathfrak{su}(2,1)\oplus2(\sqrt{-1}\mathbb{R}))$;
\item $(\mathfrak{e}_{6(-14)},\mathfrak{su}(2,2)\oplus2\mathfrak{su}(2)\oplus\sqrt{-1}\mathbb{R})$;
\item $(\mathfrak{e}_{6(-14)},\mathfrak{su}(3,1)\oplus\mathfrak{su}(1,1)\oplus\mathfrak{su}(2)\oplus\sqrt{-1}\mathbb{R})$;
\item $(\mathfrak{e}_{6(-14)},\mathfrak{su}(3,2)\oplus2(\sqrt{-1}\mathbb{R}))$;
\item $(\mathfrak{e}_{6(-14)},\mathfrak{su}(2,1)\oplus\mathfrak{su}(3)\oplus2(\sqrt{-1}\mathbb{R}))$;
\item $(\mathfrak{e}_{6(-14)},\mathfrak{su}(4,1)\oplus2(\sqrt{-1}\mathbb{R}))$;
\item $(\mathfrak{e}_{6(-14)},\mathfrak{so}(6,2)\oplus2(\sqrt{-1}\mathbb{R}))$;
\item $(\mathfrak{e}_{6(-14)},2\mathfrak{su}(1,1)\oplus\mathfrak{su}(4)\oplus\sqrt{-1}\mathbb{R})$.
\end{enumerate}
\end{theorem}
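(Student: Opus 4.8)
The plan is to recover each Klein four symmetric subalgebra $\mathfrak{g}_0^\Gamma$ from its \emph{Cartan data}: the pair formed by its compact dual $\mathfrak{u}_0^\Gamma$ and its maximal compact subalgebra $\mathfrak{k}_0=\mathfrak{u}_0^{\langle\theta,\Gamma\rangle}$. Both pieces are already in hand: the compact duals were listed after \lemmaref{22}, and the maximal compact subalgebras follow from \lemmaref{23} once $\langle\theta,\Gamma\rangle$ is identified with one of the four rank-$3$ groups. Since $\mathfrak{g}_0^\Gamma=(\mathfrak{u}_0^\Gamma)^\theta\oplus\sqrt{-1}\,(\mathfrak{u}_0^\Gamma)^{-\theta}$ with $\theta$ restricting to a Cartan involution of $\mathfrak{u}_0^\Gamma$, the noncompact real form is determined up to isomorphism by the compact symmetric pair $(\mathfrak{u}_0^\Gamma,\mathfrak{k}_0)$. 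Thus the whole theorem reduces to a factorwise reading of real forms off this data, applied to the nine candidates produced by \lemmaref{22}.

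First I would, for each of the nine subalgebras, merge the generators of $\theta$ and $\Gamma$ supplied by \lemmaref{22} to recognize $\langle\theta,\Gamma\rangle$ and record the corresponding $\mathfrak{k}_0$ from \lemmaref{23}. For example, $(\theta,\Gamma)=(x_4,\langle x_1,x_2\rangle)$ gives $\langle\theta,\Gamma\rangle=\langle x_1,x_2,x_4\rangle$, hence $\mathfrak{k}_0\cong2\mathfrak{su}(2)\oplus4(\sqrt{-1}\mathbb{R})$ against the compact dual $2\mathfrak{su}(3)\oplus2(\sqrt{-1}\mathbb{R})$; similarly $(\theta,\Gamma)=(y_3,\langle y_1y_3,y_3y_4\rangle)$ gives $\langle y_1,y_3,y_4\rangle$ and $\mathfrak{k}_0\cong\mathfrak{su}(3)\oplus\mathfrak{su}(2)\oplus3(\sqrt{-1}\mathbb{R})$ against the same compact dual.

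Next I would carry out the factorwise matching, governed by the standard correspondences: the maximal compact subalgebra of $\mathfrak{su}(p,q)$ is $\mathfrak{su}(p)\oplus\mathfrak{su}(q)\oplus\sqrt{-1}\mathbb{R}$, that of $\mathfrak{so}(p,2)$ is $\mathfrak{so}(p)\oplus\sqrt{-1}\mathbb{R}$, and a factor that stays compact contributes itself. The decisive bookkeeping is that each \emph{noncompact} Hermitian factor adds exactly one central $\sqrt{-1}\mathbb{R}$ to $\mathfrak{k}_0$, whereas the central torus already present in $\mathfrak{u}_0^\Gamma$ is inherited unchanged; comparing the total number of central summands with the semisimple part of $\mathfrak{k}_0$ then pins down every signature simultaneously. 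In the first example each $\mathfrak{su}(3)$ is forced to $\mathfrak{su}(2,1)$, and the two resulting central lines together with the inherited $2(\sqrt{-1}\mathbb{R})$ rebuild the $4(\sqrt{-1}\mathbb{R})$ of $\mathfrak{k}_0$, giving item (i); in the second the only way to reach $\mathfrak{su}(3)\oplus\mathfrak{su}(2)\oplus3(\sqrt{-1}\mathbb{R})$ from $2\mathfrak{su}(3)\oplus2(\sqrt{-1}\mathbb{R})$ is to send one $\mathfrak{su}(3)$ to $\mathfrak{su}(2,1)$ and leave the other compact, giving item (v). Running the same count through the $\mathfrak{su}(4)\oplus2\mathfrak{su}(2)$ and $\mathfrak{su}(5)$ duals yields items (ii), (iii), (viii) and items (iv), (vi) respectively. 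Here the holomorphic-type hypothesis is indispensable: it forces every noncompact factor to be of Hermitian type and so rules out non-Hermitian real forms such as $\mathfrak{sl}(4,\mathbb{R})$, whose maximal compact $\mathfrak{so}(4)\cong2\mathfrak{su}(2)$ would otherwise masquerade as the semisimple part of some $\mathfrak{k}_0$ but carries no central $\sqrt{-1}\mathbb{R}$.

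The delicate case, and the one I expect to be the main obstacle, is the $\mathfrak{so}(8)$ dual arising from $\mathfrak{g}_0^{\langle y_4,y_5\rangle}$, with $\mathfrak{k}_0\cong\mathfrak{su}(4)\oplus3(\sqrt{-1}\mathbb{R})$. Reading off the signature requires the exceptional isomorphism $\mathfrak{so}(6)\cong\mathfrak{su}(4)$ to recognize $\mathfrak{su}(4)\oplus\sqrt{-1}\mathbb{R}$ as $\mathfrak{so}(6)\oplus\mathfrak{so}(2)$, identifying the factor as $\mathfrak{so}(6,2)$ and hence item (vii); one must further note that the a priori competing Hermitian real form $\mathfrak{so}^*(8)$, which shares the maximal compact $\mathfrak{u}(4)$, is in fact isomorphic to $\mathfrak{so}(6,2)$ (their restricted root systems of type $B_2=C_2$ carry identical multiplicities, short $4$ and long $1$), so no ambiguity survives. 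Finally I would reconcile the count: the two subalgebras $\mathfrak{g}_0^{\langle y_3,y_4\rangle}$ and $\mathfrak{g}_0^{\langle y_4,y_3y_5\rangle}$ share the Cartan data $(\mathfrak{su}(5)\oplus2(\sqrt{-1}\mathbb{R}),\ \mathfrak{su}(4)\oplus3(\sqrt{-1}\mathbb{R}))$ and are therefore isomorphic, which both settles the conjugacy left open at the close of \lemmaref{22} and collapses the nine candidates to the eight pairs listed.
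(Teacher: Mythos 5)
Your proposal is correct and takes essentially the same approach as the paper: the paper's own proof is a one-line citation of \lemmaref{22}, \lemmaref{23}, and the compact duals from \cite[Table 4]{HY}, leaving implicit exactly the factorwise reconstruction of each real form from the Cartan data $(\mathfrak{u}_0^\Gamma,\mathfrak{u}_0^{\langle\theta,\Gamma\rangle})$ that you spell out, including the delicate identification $\mathfrak{so}^*(8)\cong\mathfrak{so}(6,2)$ in the $\mathfrak{so}(8)$ case. One small caveat: abstract isomorphism of $\mathfrak{g}_0^{\langle y_3,y_4\rangle}$ and $\mathfrak{g}_0^{\langle y_4,y_3y_5\rangle}$ shows that both candidates yield item (vi), which is all the theorem needs, but it does not by itself ``settle'' the conjugacy question left open at the end of \lemmaref{22}, since isomorphic fixed-point subalgebras need not arise from conjugate pairs $(\theta,\Gamma)$ --- a point the paper itself also leaves unresolved.
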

\begin{proof}
The conclusion follows from \lemmaref{22}, \lemmaref{23}, and the corresponding Klein four symmetric subalgebras of $\mathfrak{e}_{6(-78)}$ given in \cite[Table 4]{HY}, which are also listed above.
\end{proof}
\subsection{Application to Representation Theory}
As an application of \theoremref{24}, recall \corollaryref{17} that Klein four symmetric pairs of holomorphic type are related to the pairs $(G,G')$ of real reductive groups such that there exists a unitary highest weight representation of $G$, which is $G'$-admissible.

Let $\{e_i\}_{i=1}^n$ be a orthonormal basis for $\mathbb{R}^n$ with respect to the standard inner product, such that $\mathrm{Pin}(n)$ is defined as a subgroup contained in the Clifford algebra $\mathrm{Cl}(\mathbb{R}^n)$. Write $c=e_1e_2\cdots e_n\in\mathrm{Pin}(n)$, and it is known that $c\in\mathrm{Spin}(n)$ if and only if $n$ is even; in this case, $c$ is in the center of $\mathrm{Spin}(n)$ and any noncompact dual $\mathrm{Spin}(m,n-m)$ for $1\leq m\leq n-1$.
\begin{theorem}\label{25}
There are totally 8 Klein for symmetric pairs of holomorphic type for $\mathrm{E}_{6(-14)}$ up to conjugation:
\begin{enumerate}[(i)]
\item $(\mathrm{E}_{6(-14)},\mathrm{SU}(2,1)^2\times\mathrm{U}(1)^2/\langle(e^{\frac{2\pi\sqrt{-1}}{3}}I_3,I_3,e^{\frac{2\pi\sqrt{-1}}{3}},1), (I_3,e^{\frac{2\pi\sqrt{-1}}{3}}I_3,e^{-\frac{2\pi\sqrt{-1}}{3}},1)\rangle)$;
\item $(\mathrm{E}_{6(-14)},\mathrm{SU}(2,2)\times\mathrm{SU}(2)^2\times\mathrm{U}(1)/\langle(\sqrt{-1}I_4,-I_2,I_2,\sqrt{-1}),(I_4,-I_2,-I_2,-1)\rangle)$;
\item $(\mathrm{E}_{6(-14)},\mathrm{SU}(3,1)\times\mathrm{SU}(1,1)\times\mathrm{SU}(2)\times\mathrm{U}(1)/\langle(\sqrt{-1}I_4,-I_2,I_2,\sqrt{-1}), (I_4,-I_2,-I_2,-1)\rangle)$;
\item $(\mathrm{E}_{6(-14)},\mathrm{SU}(3,2)\times\mathrm{U}(1)^2)$;
\item $(\mathrm{E}_{6(-14)},\mathrm{SU}(2,1)\times\mathrm{SU}(3)\times\mathrm{U}(1)^2/ \langle(e^{\frac{2\pi\sqrt{-1}}{3}}I_3,I_3,e^{\frac{2\pi\sqrt{-1}}{3}},1),(I_3,e^{\frac{2\pi\sqrt{-1}}{3}}I_3,e^{-\frac{2\pi\sqrt{-1}}{3}},1)\rangle)$;
\item $(\mathrm{E}_{6(-14)},\mathrm{SU}(4,1)\times\mathrm{U}(1)^2)$;
\item $(\mathrm{E}_{6(-14)},\mathrm{Spin}(6,2)\times\mathrm{U}(1)^2/\langle(-1,-1,1),(c,1,-1)\rangle)$;
\item $(\mathrm{E}_{6(-14)},\mathrm{SU}(1,1)^2\times\mathrm{SU}(4)\times\mathrm{U}(1)/\langle(-I_2,I_2,\sqrt{-1}I_4,\sqrt{-1}),(-I_2,-I_2,I_4,-1)\rangle)$.
\end{enumerate}
\end{theorem}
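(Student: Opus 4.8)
The plan is to lift the Lie-algebra classification of \theoremref{24} to the group level by working out the fixed-point subgroup $G':=(\mathrm{E}_{6(-14)})^{\langle\tau,\sigma\rangle}$ explicitly. Since $\mathrm{E}_{6(-14)}=\mathrm{Int}\,\mathfrak{e}_{6(-14)}$ is centerless and $\langle\tau,\sigma\rangle$ is the Klein four group generated by the two commuting holomorphic-type involutions, $G'$ is a reductive subgroup whose Lie algebra $\mathfrak{g}_0^{\langle\tau,\sigma\rangle}$ is already on the list of \theoremref{24}; the remaining content of the statement is purely the global structure, that is the product of classical factors together with the finite central subgroup by which it is divided. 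First I would fix, for each of the four rank-$3$ groups $\langle\theta,\Gamma\rangle$ of \lemmaref{22}, the concrete realization inside $\mathrm{SU}(6)\times\mathrm{Sp}(1)/\langle(e^{2\pi\sqrt{-1}/3}I_6,1),(-I_6,-1)\rangle$ given by the displayed isomorphisms preceding \lemmaref{23}, so that $\theta$ (conjugate to $\sigma_2$) and the generators of $\Gamma$ become explicit matrices.

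The observation that organizes everything is that the noncompact group $G'$ and the compact centralizer $U':=Z_{\mathrm{Int}\,\mathfrak{u}_0}(\Gamma)$ are two real forms of one complex reductive group, so they share the same isogeny type: both are the quotient of the same cover (a product of simply connected simple factors and a torus) by the same central subgroup, provided each central element survives in the noncompact form. I would therefore compute $U'$ directly in the matrix model as the centralizer of the image of $\Gamma$, read off its simple factors from the subalgebras $\mathfrak{u}_0^\Gamma$ recorded above, and determine the finite subgroup of central identifications forced by the relation $\langle(e^{2\pi\sqrt{-1}/3}I_6,1),(-I_6,-1)\rangle$ together with the fact that $\mathrm{E}_{6(-14)}$ is centerless. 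The signature of each noncompact factor is then dictated by $\theta$: a compact factor $\mathfrak{su}(n)\subseteq\mathfrak{u}_0^\Gamma$ becomes $\mathfrak{su}(p,q)$ and $\mathfrak{so}(8)$ becomes $\mathfrak{so}(6,2)$, with $(p,q)$ and $(6,2)$ chosen exactly so that the $\theta$-fixed part reproduces the maximal compact subalgebra $\mathfrak{u}_0^{\langle\theta,\Gamma\rangle}$ of \lemmaref{23}, while the factors on which $\theta$ acts trivially stay compact and supply the $\mathrm{SU}(2)$, $\mathrm{SU}(3)$, $\mathrm{SU}(4)$, and $\mathrm{U}(1)$ pieces.

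Carrying this out case by case produces the eight groups. Items (iv) and (vi) give $\mathrm{SU}(3,2)\times\mathrm{U}(1)^2$ and $\mathrm{SU}(4,1)\times\mathrm{U}(1)^2$ with no further identification. Items (i) and (v) acquire a central subgroup of $\mathbb{Z}_3\times\mathbb{Z}_3$ type coming from the generator $e^{2\pi\sqrt{-1}/3}I_6$ of the quotient defining $G^{\sigma_1}$. Items (ii), (iii), and (viii) acquire central subgroups generated by tuples involving $\sqrt{-1}$ and $-1$, reflecting the two commuting holomorphic involutions. Item (vii) is the one requiring passage from the orthogonal model $\mathfrak{so}(6,2)$ to $\mathrm{Spin}(6,2)$: here I would use the remark that the Clifford element $c=e_1e_2\cdots e_n$ is central and survives in $\mathrm{Spin}(6,2)$ to identify the generator $(c,1,-1)$. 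Before assembling, I would check that each $G'$ is connected; this is expected because the holomorphic-type hypothesis gives $Z(\mathfrak{k}_0)\subseteq\mathfrak{g}_0^{\langle\tau,\sigma\rangle}$, so $G'$ contains the central circle of a maximal compact subgroup.

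The step I expect to be the main obstacle is precisely the determination of these central subgroups, which the paper elsewhere calls a tedious computation. Matching abstract factors to the explicit matrices is routine once the realizations are fixed, but deciding exactly which tuples of central elements of $\mathrm{SU}(p,q)$, $\mathrm{SU}(2)$, $\mathrm{Spin}(6,2)$, and the circle factors become the identity in the adjoint group $\mathrm{E}_{6(-14)}$ requires careful bookkeeping through the two layers of quotient already present in $\mathrm{SU}(6)\times\mathrm{Sp}(1)/\langle(e^{2\pi\sqrt{-1}/3}I_6,1),(-I_6,-1)\rangle$; the spin case (vii) is the most delicate, since one must keep track of the relation between $\mathrm{Spin}(6,2)$, its image $\mathrm{SO}(6,2)$, and the element $c$ in the Clifford algebra. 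Once these subgroups are pinned down, the eight pairs of \theoremref{25} follow from \theoremref{24}, \lemmaref{22}, \lemmaref{23}, and the realizations above.
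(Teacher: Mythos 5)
Your plan is essentially an attempt to re-derive by hand what the paper simply cites: the paper's entire proof is that $\mathrm{E}_{6(-14)}=\mathrm{Int}\,\mathfrak{e}_{6(-14)}$ is connected, so the global forms follow from \theoremref{24} together with \cite[Table 6]{HY}, where the centralizers of Klein four subgroups in the compact adjoint group are already tabulated. Re-deriving that table is legitimate in principle, and your signature-matching step (choosing $\mathfrak{su}(p,q)$ and $\mathfrak{so}(6,2)$ so that the $\theta$-fixed part reproduces $\mathfrak{u}_0^{\langle\theta,\Gamma\rangle}$ from \lemmaref{23}) is sound. But your central computational step fails structurally in case (vii). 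The realizations displayed before \lemmaref{23} live inside $G^{\sigma_1}$, whose identity part is $\mathrm{SU}(6)\times\mathrm{Sp}(1)/\langle(e^{\frac{2\pi\sqrt{-1}}{3}}I_6,1),(-I_6,-1)\rangle$, and computing the centralizer of the image of $\Gamma$ \emph{inside this model} only yields $Z_{\mathrm{Int}\mathfrak{u}_0}(\Gamma)\cap G^{\sigma_1}$; this equals the true centralizer only when $Z_{\mathrm{Int}\mathfrak{u}_0}(\Gamma)\subseteq G^{\sigma_1}$, e.g.\ when $\sigma_1$ lies in the image of $\Gamma$. For item (vii) the group is $\Gamma=\langle y_4,y_5\rangle$, all of whose nontrivial elements $y_4$, $y_5$, $y_4y_5$ are conjugate to $\sigma_2$ by \lemmaref{19} and \lemmaref{20}, and $\mathfrak{u}_0^\Gamma\cong\mathfrak{so}(8)\oplus2(\sqrt{-1}\mathbb{R})$. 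Since $\mathfrak{so}(8)$ has no nontrivial representation of dimension less than $8$, it cannot embed into $\mathfrak{su}(6)\oplus\mathfrak{sp}(1)$; hence $Z_{\mathrm{Int}\mathfrak{u}_0}(\langle y_4,y_5\rangle)$ is not contained in \emph{any} conjugate of $G^{\sigma_1}$, and no choice of realization in your matrix model can produce it. Concretely, what your computation returns has Lie algebra $\mathfrak{u}_0^{\langle y_3,y_4,y_5\rangle}\cong\mathfrak{su}(4)\oplus3(\sqrt{-1}\mathbb{R})$, of dimension $18$, instead of the required $30$-dimensional group. (With the displayed realizations the same defect occurs for $\langle y_1,y_4\rangle$ and $\langle y_4,y_3y_5\rangle$; those two are repairable by re-embedding so that an element of $\Gamma$ conjugate to $\sigma_1$ maps to $(I_6,-1)$, but (vii) is not, and requires a model of $G^{\sigma_2}$, i.e.\ a $\mathrm{Spin}(10)\times\mathrm{U}(1)$ picture, which the paper never writes down.)

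The second gap is connectedness. Your argument --- $G'$ contains the central circle of $K$, hence is connected --- is not valid: a disconnected group can contain any torus, so $Z(\mathfrak{k}_0)\subseteq\mathfrak{g}_0^{\langle\tau,\sigma\rangle}$ says nothing about $\pi_0(G')$. This matters because \definitionref{8} takes $G^{\{\sigma,\tau\}}$ to be the \emph{full} fixed-point group, while every group listed in \theoremref{25} is connected; moreover your isogeny-transfer principle only determines the identity component, since two real forms of one connected complex reductive group can have different component groups (compare $\mathrm{SO}(8)$ and $\mathrm{SO}(6,2)$ inside $\mathrm{SO}(8,\mathbb{C})$), so ``same cover, same central subgroup'' cannot by itself exclude extra components in the noncompact form. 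The standard repair is to choose the Cartan involution $\theta$ centralizing $\Gamma$ (\propositionref{2}) and use the decomposition $G'=K'\exp(\mathfrak{p}_0^{\Gamma})$, which gives $\pi_0(G')=\pi_0(K')$ with $K'=K^{\Gamma}$; but then the component count of $K^\Gamma$ (fixed points of involutions on a non-simply-connected compact group) is itself a global computation of exactly the kind you are trying to carry out, and must be settled case by case or quoted from \cite[Table 6]{HY} as the paper does.
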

\begin{proof}
Since $\mathrm{E}_{6(-14)}=\mathrm{Int}\mathfrak{e}_{6(-14)}$ is connected, the conclusion follows from \theoremref{24} and \cite[Table 6]{HY}.
\end{proof}
\begin{corollary}\label{26}
If $(G,G')$ is one of the pairs listed in \theoremref{25}, then any irreducible unitary highest weight representation of $G$ is $G'$-admissible. In particular, there exist an infinite dimensional irreducible unitary representation $\pi$ of $G$ and an irreducible unitary representation $\tau$ of $G'$ such that\[0<\dim_\mathbb{C}\mathrm{Hom}_{\mathfrak{g}',K'}(\tau_{K'},\pi_K|_{\mathfrak{g}'})<+\infty\]where $K$ is a maximal compact subgroup of $G$, $\mathfrak{g}$ is the complexified Lie algebra of $G$, $\pi_K|_{\mathfrak{g}'}$ is the restriction of underlying $(\mathfrak{g},K)$-module of $\pi$ to the complexified Lie algebra $\mathfrak{g}'$ of $G'$, $K'$ is a maximal compact subgroup of $G'$ such that $K'\subseteq G'\cap K$, and $\tau_{K'}$ is the underlying $(\mathfrak{g}',K')$-module of $\tau$.
\end{corollary}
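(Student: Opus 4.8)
The plan is to derive \corollaryref{26} from \corollaryref{17}, treating the first assertion as a direct application and the ``in particular'' clause as a short unwinding of the definition of $G'$-admissibility. First I would record that every pair $(G,G')$ in \theoremref{25} is, by its very construction, a Klein four symmetric pair of holomorphic type: the subgroups $G'$ listed there are the global fixed-point groups $G^{\{\tau,\sigma\}}$ attached to the commuting holomorphic-type involutions $\tau,\sigma$ produced in \theoremref{24}, and the connectedness of $\mathrm{E}_{6(-14)}=\mathrm{Int}\,\mathfrak{e}_{6(-14)}$ identifies these fixed-point groups with the explicit quotients displayed in \theoremref{25}. Consequently \corollaryref{17} applies without modification and gives at once that any irreducible unitary highest weight representation $\pi$ of $G$ is $G'$-admissible, which is the first assertion.

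Next, for the existence clause I would exhibit a concrete infinite-dimensional $\pi$. Since $G$ is of Hermitian type, a (scalar) holomorphic discrete series representation—or any unitary highest weight module with suitably dominant highest weight—furnishes an infinite-dimensional irreducible unitary highest weight representation $\pi$ of $G$. By the first part, $\pi|_{G'}$ is $G'$-admissible, hence decomposes as a discrete Hilbert direct sum $\pi|_{G'}\cong\widehat{\bigoplus}_{\tau\in\widehat{G'}}n_\pi(\tau)\tau$ with every multiplicity $n_\pi(\tau)$ a finite nonnegative integer. Because $\pi\neq0$, at least one $\tau\in\widehat{G'}$ occurs with $n_\pi(\tau)>0$; I would fix such a $\tau$, which is an irreducible unitary representation of $G'$.

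Finally I would pass from this global decomposition to the infinitesimal Hom space appearing in the statement. The essential input is Kobayashi's theory of discretely decomposable restrictions (as invoked already through \cite[Theorem 1.2]{Ko2} in the proof of \propositionref{13}): $G'$-admissibility forces the underlying $(\mathfrak{g},K)$-module $\pi_K$, when restricted to $(\mathfrak{g}',K')$, to decompose algebraically in agreement with the unitary branching law, so that $\dim_\mathbb{C}\mathrm{Hom}_{\mathfrak{g}',K'}(\tau_{K'},\pi_K|_{\mathfrak{g}'})=n_\pi(\tau)$. This single equality delivers both required bounds simultaneously: positivity from $n_\pi(\tau)>0$ and finiteness from admissibility. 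The only genuinely delicate step is this last identification of the algebraic multiplicity with the unitary one; the remainder is organizational bookkeeping resting on \theoremref{25} and \corollaryref{17}.
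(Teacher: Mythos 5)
Your proposal is correct and follows essentially the same route as the paper: the first assertion is obtained by noting that the pairs of \theoremref{25} are Klein four symmetric pairs of holomorphic type and applying \corollaryref{17}, and the ``in particular'' clause is obtained by passing from the Hilbert-space branching law to the underlying $(\mathfrak{g},K)$-modules. The paper compresses your last step into the phrase ``the correspondence between unitary representations and underlying $(\mathfrak{g},K)$-modules''; your more explicit treatment---producing a highest weight representation, extracting a $\tau$ with $n_\pi(\tau)>0$, and identifying $\dim_\mathbb{C}\mathrm{Hom}_{\mathfrak{g}',K'}(\tau_{K'},\pi_K|_{\mathfrak{g}'})=n_\pi(\tau)$ via the $K'$-admissibility underlying \propositionref{13} and \cite[Theorem 1.2]{Ko2}---is exactly the content that phrase is standing in for.
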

\begin{proof}
The first conclusion follows from \corollaryref{17} and \theoremref{25} immediately, and the second statement follows from the correspondence between unitary representations of Lie groups and underlying $(\mathfrak{g},K)$-modules.
\end{proof}

\begin{center}
\textbf{Acknowledgement}
\end{center}
The author would like to express the thankfulness to professor Jun YU who gave some helpful suggestions and supports to this article.

\end{document}